\DeclareMathAlphabet{\pazocal}{OMS}{zplm}{m}{n}
\DeclareMathAlphabet{\mymathbb}{U}{bbold}{m}{n}
\DeclareMathAlphabet{\dutchcal}{U}{dutchcal}{b}{n}
    \definecolor{linkcolor}{HTML}{0D6A9E}
    \definecolor{3blue}{HTML}{0072B2}
    \definecolor{3green}{HTML}{009E73}
    \definecolor{3ochre}{HTML}{E69F00}
    \definecolor{3yellow}{HTML}{F0E442}
    \definecolor{3cyan}{HTML}{56B4E9}
    \definecolor{3red}{HTML}{D55E00}
    \definecolor{3pink}{HTML}{CC79A7}
    \definecolor{2blue}{HTML}{1A85FF}
    \definecolor{2red}{HTML}{D41159}
    \tikzset{vertl/.style={anchor=south, rotate=90, inner sep=1mm}}
    \tikzset{vertr/.style={anchor=south, rotate=-90, inner sep=1mm}}
\newtheoremstyle{komait}
      {\topsep}   
      {\topsep}   
      {\itshape}  
      {0pt}       
      {\bfseries\sffamily} 
      {.}         
      {5pt plus 1pt minus 1pt} 
      {}          
    \newtheoremstyle{komanormal}
      {\topsep}   
      {\topsep}   
      {\rmfamily}  
      {0pt}       
      {\bfseries\sffamily} 
      {.}         
      {5pt plus 1pt minus 1pt} 
      {}          
\theoremstyle{komait}
    \newtheorem{theorem}{Theorem}[section]
    \newtheorem{lemma}[theorem]{Lemma}
    \newtheorem{definition}[theorem]{Definition}
    \newtheorem{proposition}[theorem]{Proposition}
    \newtheorem{conjecture}[theorem]{Conjecture}
    \newtheorem*{braid*}{Braid theorem}
\theoremstyle{komanormal}
    \NewCommandCopy{\proofqedsymbol}{\qedsymbol}
    \newcommand{\openboxthickness}{0.4pt} 
        \renewcommand{\openbox}{\leavevmode
          \hbox to.77778em{%
            \hfil\vrule width \openboxthickness
            \vbox to.675em{%
              \hrule width \dimexpr.675em-2\dimexpr\openboxthickness height \openboxthickness
              \vfil
              \hrule height\openboxthickness
            }%
            \vrule width \openboxthickness
            \hfil
          }%
        }
    \renewcommand{\openboxthickness}{.675pt}
    \newtheorem{remark}[theorem]{Remark}
\newcommand{\B}{\mathrm{Br}}
\newcommand{\A}{\mathrm{Ar}}
\newcommand{\Lin}{\mathrm{Lin}(W^\alpha_n)}
\newcommand{\Tran}{\mathrm{Tran}(W^\alpha_n)}
\newcommand{\tu}[1]{\tau_{\!#1}}
\newcommand{\sg}[1]{\sigma_{\!#1}}
\newcommand{\isg}[1]{\sigma_{\!#1}^{-1}}
\newcommand{\diag}{\mathrm{diag}}
\newcommand{\naturali}{\mathbb{N}}
\newcommand{\integer}{\mathbb{Z}}
\newcommand{\complex}{\mathbb{C}}
\begin{document}

\title{\usekomafont{subtitle}\LARGE\vspace{-2.5em}Generic Hecke algebras in the infinite}
\author{\raisebox{-.5ex}{\href{https://orcid.org/0000-0002-4975-8774}{\includegraphics[height=15pt]{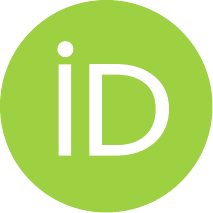}}}\hspace{.5em}Davide DAL MARTELLO\footnote{\hspace{.4em}Department of Mathematics, University of Padua, Via Trieste 63, 35121 Padova, IT\\\faEnvelopeO\hspace*{.5em}davide.dalmartello@unipd.it, \href{mailto:contact@davidedalmartello.com}{contact@davidedalmartello.com}}}
\date{\vspace{-2.5em}}


\maketitle

\begin{abstract}\noindent
Aiming for a revival of the theory of crystallographic complex reflection groups, we compute (minimal) Coxeter-like reflection presentations for the infinite families of those non-genuine groups which satisfy Steinberg's fixed point theorem. These new presentations behave à la Coxeter, encoding many of the group's properties at a glance, and their signature feature---named the $x$-relation---is fully understood in terms of configuration spaces. Crucially, the presentations further achieve the braid theorem, allowing to deform into the generic Hecke algebra. In particular, we revisit the affine GDAHA family in deformation terms of the most general class of Steinberg crystallographic complex reflection groups.
\end{abstract}

\noindent
\begin{center}{\vspace{-.5em}\small{\textbf{\textsf{Keywords}}\hspace{.5em}reflection groups, braid theorem, configuration spaces, generic Hecke algebras.}}
\end{center}

\renewcommand{\contentsname}{\textcolor{linkcolor}{Contents}}

\tableofcontents

\section{Introduction}

The initial motivation for this paper was to ground the theory of generalized double affine Hecke algebras (GDAHA, \Cref{def:GDAHA}) in geometric terms of reflections. GDAHAs arise---in the very spirit of the original DAHA theory---as quotients of group algebras for certain braid groups, are known to be formally flat, and expected to quantize Calogero-Moser spaces \cite{EGO2006}.

Unexpectedly, the hunt for the right class of reflection groups led us to unearth a niche of papers that pioneered the study of ``complex r-groups'' \cite{BS1978,KTY1982,Popov2022,TY1982}. The panorama we look at spans four types of (irreducible) reflection groups, separated by finiteness and choice of ground field between real and complex.
The beautiful classical theory of finite real reflection groups shows that, algebraically, they are exactly the finite Coxeter groups. In particular, they encompass all (finite) Weyl groups. On the one hand, failing finiteness one matches the affine Weyl groups of irreducible root systems. On the other hand, going complex one finds the well-known Shepard-Todd groups (see \cite{Cohen1976} for a modern unified take and \cite[§3]{PS2019} for a minimal primer). These groups have been studied extensively in the past decades: in particular, they all admit a reflection presentation delivering the \hyperref[braid-thm]{braid theorem}, whose Hecke deformation is algebraically flat---namely, free as a module over the ring of parameters (see \cite{Puente2017} for a round-up).

Thus, three out of four types of reflection groups are by now well-understood; on the contrary, the infinite complex setting remains, comparatively, underdeveloped and much less known.
With the present paper, we hope to ignite a revival of this foundational corner of algebra which, following in DAHA's footsteps, promises a wealth of additions to representation theory, special functions, integrable systems \cite{EOR2007}, singularity theory \cite{GK2010}, and beyond.

To start with, a complete classification originally developed in the eighties is now widely available \cite{Popov2022}: it allows the restriction to \emph{crystallographic} infinite complex reflection groups, describing their theory as the complex twin of the affine Weyl one. Indeed, the structure of any such reflection group $W$, acting on a $n$-dimensional $\complex$-vector space, is an extension of a rank $2n$ lattice $\mathrm{Tran}(W)$ by a Shepard-Todd reflection group $G=\mathrm{Lin}(W)$, and is almost always split. Whether $G$ is a complexification of an irreducible finite Weyl group determines the \emph{genuineness} of $W$ \cite{Malle1996}, and the non-genuine case has moduli with complex parameter $\alpha$ on the ``modular strip'' \cite{Popov2022}. In Popov's bracket notation labeling the $W$-invariant lattices (when multiple) by an integral subscript $k$, we write $[G]_k$ for a genuine group and $[G]_k^\alpha$ otherwise.

As we aim for the Hecke deformation, the present paper takes a topological approach to the subject. Extending the generic Hecke algebra for Shepard-Todd groups \cite{BMR1998} to the infinite setting requires the understanding of the group's \emph{regular orbit space} $N_W/W$, where $N_W$ denotes the space of regular elements.
Specifically, one needs a monodromy presentation for the fundamental group $\pi_1(N_W/W)$.
Under finiteness, Steinberg's regularity theorem ensures that $N_W=M_W$, where $M_W$ denotes the space $W$ acts upon after removal of all reflection hyperplanes, leading to the usual definition of the \emph{braid group} $\B(W):=\pi_1(M_W/W)$ that is equally valid for affine Weyl groups---as they also satisfy $N_W=M_W$.
It was first observed by Puente \cite{Puente2017} that this structural property can fail in the infinite complex setting, and mostly does for non-genuine groups, prompting the following
\begin{definition}
    A reflection group $W$, acting on a vector space $V$, is said to be \emph{Steinberg} if any vector in V, which is fixed by a non-identity group element of $W$, lies on a reflection hyperplane for $W$.
\end{definition}
In other words, the regular points under the action of a Steinberg reflection group are precisely those lying off of the reflection hyperplanes.
Going ``dense'' by focusing on infinite families of reflection groups, indexed by a natural number $n=\mathrm{dim}(V)\geq1$, in the present paper we thus restrict to the following cases:
\begin{theorem}[\cite{PS2019}]
    Let $W_n$ be an infinite family of Steinberg crystallographic complex reflection groups. When genuine, 
    \begin{equation}\label{genuine}
        W_n\in\big\{\  [G(3,1,n)]_1;\ [G(4,1,n)]_{1,2},\ [G(4,2,n)]_{1,2};\ [G(6,1,n)],\ [G(6,2,n)],\ [G(6,3,n)]\ \big\}.
    \end{equation}
    When non-genuine, there are just two possibilities:
    \begin{equation}\label{non-genuine}
        W_n^\alpha\in\big\{\ [G(1,1,n)]^\alpha;\ [G(2,1,n)]^\alpha_1\ \big\}.
    \end{equation}    
\end{theorem}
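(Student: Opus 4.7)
The plan is to derive this as a two-stage filter on Popov's tabulation of crystallographic complex reflection groups \cite{Popov2022}: first restrict to those crystallographic infinite families indexed by the ambient dimension $n$, then impose the Steinberg regularity property.

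\textbf{First stage.} Among Shepard-Todd groups, only the imprimitive $G(de,e,n)$ form an $n$-parametric infinite family, and crystallographicity forces $d$ to be the order of a root of unity generating at most a quadratic extension of $\rational$, hence $d\in\{1,2,3,4,6\}$. For each admissible pair $(d,e)$ one reads off from Popov's tables the $G$-invariant lattices on $V=\complex^n$: when $G$ is genuinely complex (i.e.\ $d\in\{3,4,6\}$) the lattices are rigid and enumerated by an integer subscript $k$, whereas when $G$ is a complexification of an irreducible finite Weyl group ($d\in\{1,2\}$) a continuous modular parameter $\alpha$ on the modular strip enters. This extracts a finite list of candidate infinite families.

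\textbf{Second stage.} Given a candidate $W_n$ with lattice $T:=\mathrm{Tran}(W_n)$ and linear part $G:=\mathrm{Lin}(W_n)$, a non-identity element $w=tg$ with $t\in T$, $g\in G$ has nonempty fixed locus if and only if $t\in(1-g)V$, in which case $V^w$ is an affine subspace parallel to the linear fixed space $V^g$. Steinberg then demands, for every such non-reflection $w$, that $V^w$ lie entirely within the union $\mathcal{H}$ of reflection hyperplanes of $W_n$. Using the explicit diagonal-plus-permutation description of the imprimitive reflections of $G(de,e,n)$ against the chosen lattice, one verifies that Steinberg holds exactly for the families in \eqref{genuine} and \eqref{non-genuine}, and fails for each remaining candidate by producing an explicit non-reflection element whose fixed affine subspace avoids $\mathcal{H}$.

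The main obstacle is the non-genuine branch: since the lattice varies continuously with $\alpha$, Steinberg must be checked \emph{uniformly} along the modular strip, both to confirm it for $[G(1,1,n)]^\alpha$ and $[G(2,1,n)]^\alpha_1$ and to rule it out for every other non-genuine candidate. The key simplification is that the condition $t\in(1-g)V$ depends on $t$ only modulo $(1-g)T$, so the $\alpha$-dependence collapses to a finite combinatorial test in the quotient $V/(1-g)V$, rendering the verification uniform and closing the classification.
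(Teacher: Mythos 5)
This statement is not proved in the paper at all: it is imported verbatim from \cite{PS2019} (building on Popov's classification), so there is no internal proof to compare against. Judged on its own terms, your two-stage plan is the natural one and is, in outline, what the cited work does: start from Popov's tables of crystallographic infinite families and then test the Steinberg property $N_W=M_W$ group by group. Your reduction of the fixed-point test is also essentially sound: $w=tg$ has fixed points iff $t\in(1-g)V$, conjugation by translations moves $t$ within $t+(1-g)T$, and $(T\cap(1-g)V)/(1-g)T$ is finite, so each conjugacy class of linear parts contributes finitely many cosets to examine.

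The genuine gap is that the second stage, which is the entire mathematical content of the theorem, is asserted rather than carried out. The candidate list coming out of stage one is strictly larger than \eqref{genuine} and \eqref{non-genuine}: it contains, for instance, families with linear part $G(2,2,n)$, $G(3,3,n)$, $G(4,4,n)$, $G(6,6,n)$, as well as further lattice labels $k$ for the groups you do keep, and your proof gives neither a verification of Steinberg for the retained families nor a single explicit witness (a non-reflection element whose fixed affine subspace escapes the hyperplane arrangement) for any excluded one. ``One verifies'' here is precisely the theorem. Two further points need tightening. First, the crystallographic restriction concerns the order of the cyclic diagonal part, i.e.\ the first parameter of $G(de,e,n)$, so the constraint is $de\in\{1,2,3,4,6\}$, not a condition on $d$ alone as written. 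Second, your uniformity-in-$\alpha$ argument only shows that the \emph{existence} of fixed points is insensitive to $\alpha$ modulo $(1-g)T$; the Steinberg condition also asks whether the resulting affine fixed subspace lies inside the $\alpha$-dependent translated hyperplane arrangement, and that containment does not obviously reduce to a finite test independent of $\alpha$ — it is exactly where genericity of $\alpha$ on the modular strip must be invoked and argued, not just asserted. As it stands the proposal is a correct program but not a proof; the honest options are to execute the case analysis or to cite \cite{PS2019} as the paper does.
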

\begin{remark}
    Lifting the Steinberg assumption, $N_W\neq M_W$ and one should study the fundamental group $\pi_1(N_W/W)$ directly. Under genuineness, the two spaces are claimed to identify topologically, namely $\pi_1(N_W/W)\simeq\pi_1(M_W/W)$ \cite[Theorem 3.10]{Puente2017}, while an attempted proof is still missing in the non-genuine case.  
\end{remark}
Free from ambiguity in the definition of the braid group, we hunt for the needed monodromy presentation with a purely algebraic approach based on the braid theorem. The statement, namely the existence of a reflection presentation that, after removing the finite order relations for all generating reflections, reduces to a presentation for the braid group, is a beautiful match between algebra and topology. After the pioneering work of Artin, we denote the \emph{abstract} group obtained by such removal as $\A(W)$, the \emph{Artin group} of $W$. Thus, we can boil down the theorem to a single neat formula: 
\begin{braid*}
     There exists a reflection presentation of $W$ such that
     \begin{equation}\label{braid-thm}
         \A(W) \ \simeq \ \B(W).
     \end{equation}
\end{braid*}
\begin{remark}
Let us stress the essential difference between the two objects in \eqref{braid-thm} substantiating the theorem's appeal: the left hand side is abstract, purely algebraic, and (crucially) expressed as a simple presentation; the right hand side is canonical, truly topological, and (in principle) quite challenging to harness in symbolic terms.
\end{remark}
Of course, the theorem hinges entirely on the existence of such reflection presentation and, as of yet, no a priori recipe to define it is known. As a basic obstruction, the complex setting still lacks a canonical method for constructing a generating system of reflections. In contrast with the real settings, these deficiencies keep limiting the theory to be case-by-case driven. Nevertheless, combining individual results of Malle with Puente's extended presentation for $[G(6,3,n)]$, we can state the braid theorem for each and every genuine group of our interest:
\begin{theorem}[\cite{Malle1996,Puente2017}]\label{thm:genuine-pres}
    Let $W_n$ be an infinite family of genuine Steinberg crystallographic complex reflection groups. Then,
    \begin{enumerate}[label=\alph*.]
        \item \Cref{tab:genuine} gives $W_n$ a reflection presentation;
        \item Such reflection presentation achieves the braid theorem.
    \end{enumerate}
\end{theorem}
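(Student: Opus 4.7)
The plan is to handle the eight infinite families in \eqref{genuine} by combining existing results from \cite{Malle1996} and \cite{Puente2017}. For each family, the relevant row of \Cref{tab:genuine} lists a set of complex reflections together with a finite set of relations, and I would proceed in two stages.

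For part (a), I would verify that the listed generators form a reflection generating set for $W_n$ and that the stated relations hold. Since $W_n$ fits in a (split) extension $1\to\mathrm{Tran}(W_n)\to W_n\to\mathrm{Lin}(W_n)\to 1$, with $\mathrm{Lin}(W_n)$ a Shepard–Todd group and $\mathrm{Tran}(W_n)$ a rank-$2n$ lattice, the analysis naturally splits into two parts. The linear factor is controlled by the classical Shepard–Todd presentations, which supply all but one of the generators. The translational factor then reduces to checking that the additional ``affine'' generator---together with its conjugates under the linear part---exhausts $\mathrm{Tran}(W_n)$, and that every lattice relation is a consequence of the listed ones. Completeness of the presentation would be established by exhibiting a retraction from the abstract group defined by the presentation onto $W_n$, or equivalently by a direct rank/index comparison using the semidirect-product structure.

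For part (b), the braid theorem $\A(W_n)\simeq\B(W_n)$ would be invoked case by case from the two cited sources. For the seven families $[G(3,1,n)]_1$, $[G(4,1,n)]_{1,2}$, $[G(4,2,n)]_{1,2}$, $[G(6,1,n)]$, and $[G(6,2,n)]$, the isomorphism is established in \cite{Malle1996} through explicit monodromy arguments in the regular complement $M_{W_n}$, lifting the algebraic generators to geometric braid loops and matching all relations topologically. For the remaining family $[G(6,3,n)]$, Malle's original presentation falls short of reaching the braid theorem, and an additional generator (with attending relation) was supplied by Puente in \cite{Puente2017}, who also verifies the corresponding topological isomorphism.

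The truly hard part is of course the braid theorem itself, but here it has already been carried out in the cited literature, so the present proof is reduced to careful bookkeeping: organizing the reflection presentations into a single uniform table and checking that each one falls within the scope of one of the two cited results. I expect the main obstacle to lie in the $[G(6,3,n)]$ case---it is the only family requiring Puente's extended presentation, and some care is needed to confirm that the relations collected in \Cref{tab:genuine} faithfully reproduce the extended set without introducing any redundancy or gap relative to the topological braid group $\pi_1(M_{W_n}/W_n)$.
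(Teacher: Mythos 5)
Your proposal matches the paper's treatment: \Cref{thm:genuine-pres} is not proved anew in the paper but is exactly the combination you describe---Malle's case-by-case presentations and braid-theorem verifications in \cite{Malle1996} for the seven families other than $[G(6,3,n)]$, together with Puente's extended presentation and topological check in \cite{Puente2017} for that last family, compiled into \Cref{tab:genuine}. So your approach is essentially the same as the paper's, with the added (harmless) bookkeeping of re-verifying part (a) via the semidirect-product structure.
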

Let us highlight the main features of these presentations. 
Up to an extra order relation, they are encapsulated by diagrams à la Coxeter whose nodes are labeled by the order of the corresponding generating reflection, whenever non-involutive.
As in the affine Weyl case, each diagram differs from the one codifying its Shepard-Todd part $\mathrm{Lin}(W_n)$ \cite[Tables 1 and 2]{BMR1998} by a single additional node---with the only exception of $[G(4,2,2)]_1$, which requires an additional pair.

Therefore, by forgetting order relations (including the extra one) and labeling we obtain a presentation for the respective braid group.
It is then easy to notice that genuine $[G(d,1,n)]_1$ share the regular orbit space for all possible $d=3,4,6$.
Back to our initial motivation, the triple of GDAHAs in types $E^{(1)}_{6,7,8}$ arises as deformation of the very same topological space.
This is no coincidence: the two spaces identify via an isomorphism \eqref{E-typeiso} which, crucially, survives the Hecke deformation. Being our target the affine type, only one GDAHA is then left to fully achieve our original goal.
Actually, the definition for the missing type $D^{(1)}_4$ gives nothing but the celebrated DAHA of Sahi \cite{Sahi1999}, itself recovered in the language of elliptic Hecke algebras (EHA) for type $C_n^{(1,1)}$ \cite{SS2009}. In order to give the algebra one more description à la Hecke, this time based on complex reflection groups, we must drop genuineness.

Our first result extends \Cref{thm:genuine-pres} (a) to the non-genuine setting:
\begin{theorem}\label{thm:first}
    \Cref{tab:nongenuine} gives a reflection presentation to the infinite families of non-genuine Steinberg crystallographic complex reflection groups.
\end{theorem}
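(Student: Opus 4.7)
The plan is to mirror the genuine-case strategy behind \Cref{thm:genuine-pres}, producing a reflection presentation of $W^\alpha_n$ by augmenting the known Coxeter-type presentation of its linear quotient $\Lin(W^\alpha_n)$ with one (or in exceptional low ranks, possibly two) extra reflection generators encoding both the translation lattice and the modular parameter $\alpha$. Concretely, for the two families in \eqref{non-genuine} the linear quotient is the symmetric group $S_n=G(1,1,n)$ and the hyperoctahedral group $G(2,1,n)$ respectively, each already equipped with a canonical reflection presentation whose diagram we keep as a backbone. To this backbone I would graft an additional node, the new generator being a reflection along a carefully chosen hyperplane so that its lift together with the old generators spans the rank-$2n$ lattice $\Tran(W^\alpha_n)$; the parameter $\alpha$ enters this new generator's matrix via the elliptic shape of the lattice on the modular strip.

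The proof itself has a verify-and-enumerate structure. First, I would check by direct matrix computation that all the relations listed in \Cref{tab:nongenuine} hold among the chosen reflections, which yields a surjection from the abstractly-defined group onto $W^\alpha_n$. The braid-type relations with the old generators are short to check, while the genuine new content lives in the order relation for the extra reflection and, crucially, the signature $x$-relation advertised in the abstract. The identification of this $x$-relation is guided by the configuration-space viewpoint promised later in the paper: it is exactly the algebraic incarnation of a monodromy around the additional puncture introduced by the affine/elliptic extension of the Shephard-Todd part.

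The delicate step, and the main obstacle, is completeness: showing that the abstract presentation has no further hidden relations. My plan is to exploit the (almost always) split extension $W^\alpha_n\simeq\Tran(W^\alpha_n)\rtimes\Lin(W^\alpha_n)$ and to put every word in the abstract group into a normal form $t\cdot g$ with $g$ in the Shephard-Todd subgroup and $t$ a product of conjugates of the new generator(s) that represents a lattice translation. The braid-type relations handle the commutation needed to push all old generators to the right, and the $x$-relation must be precisely what reduces any such $t$ to a canonical representative of $\Tran(W^\alpha_n)$; the rest of the old order relations then finish the job. The argument naturally proceeds by induction on $n$, with the base cases ($n=1$, and $n=2$ where rank-specific subtleties typically appear, as already witnessed in the genuine case by $[G(4,2,2)]_1$) treated separately.

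What makes this harder than the genuine counterpart is the presence of the continuous parameter $\alpha$: one must check that the $x$-relation is both necessary and sufficient, so that the resulting moduli coincide with Popov's modular strip rather than being collapsed by a hidden identification. The cleanest way to close this is to exhibit the proposed matrix representation as faithful, matching its image against the known $W^\alpha_n$ pointwise in $\alpha$, thereby pinning down both surjectivity and triviality of the kernel in one stroke.
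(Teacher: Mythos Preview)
Your overall outline---start from the split extension, write down reflections, verify relations, then argue completeness via the semidirect-product normal form---is the right shape, and it is essentially what the paper does. But there is a structural error at the very first step that would derail the whole plan.

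You propose to graft \emph{one} additional node onto the Shephard--Todd diagram, with two generators only as a low-rank exception. In the non-genuine setting this is never enough: for \emph{every} rank $n$ the group $W^\alpha_n$ has translation lattice $\Tran\simeq Q(X^\vee)^{\times 2}$ of rank $2n$, while adjoining a single affine-type reflection to the Coxeter generators of $\Lin\simeq W(X)$ produces at best the affine Weyl group $W(X^{(1)})=W(X)\ltimes Q(X^\vee)$, whose translation lattice has rank $n$. You need a \emph{pair} of additional reflections---one built from a translation by $1$, the other from a translation by $\alpha$---so that each, together with the old generators, affinizes the finite diagram, yielding a ``twice affine-extended'' picture. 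The modular parameter $\alpha$ then lives in the second new generator, not in a unique one. This is also why the $x$-relation is not ``a monodromy around the additional puncture'': it is the exchange rule tying the \emph{two} new generators together (their product lies in the lattice, so no Coxeter relation holds between them, and a genuinely new relation is forced).

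On completeness, your normal-form idea is workable but heavier than necessary. The paper instead starts from the \emph{known} semidirect-product presentation (Coxeter relations for $\Lin$, lattice basis $\tau_j$ as extra generators, action relations $\sigma_i\triangleright\tau_j$, and commutations $\tau_i\tau_j=\tau_j\tau_i$), rewrites each $\tau_j$ as a word in the new reflection generators, and then checks directly that every action and commutation relation is a consequence of the diagram together with the $x$-relation and the extra order relation. In particular, the extra order relation encodes the commutation $\tau_1\tau_2=\tau_2\tau_1$, while the $x$-relation is what makes the remaining lattice commutations go through; neither alone suffices. No induction on $n$ is needed, and no separate faithfulness argument for the matrix representation is required, since one is simply translating between two presentations of the same (already-known) group.
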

Like in the genuine case, each presentation consists of a Coxeter-like diagram complemented by a \emph{single} extra order relation, now always in involutive form $w^2=1$ for a suitable reflection $w\in W^\alpha$.  
To the best of our knowledge, this pair of presentations is a first in the literature, thus acquiring independent interest: in particular, the work of Malle and Puente restricts only to genuine groups.
\begin{remark}\label{rmk:Frobenius}
    Unlike a non-crystallographic group which, as a \emph{whole}, is a complexification of an irreducible affine Weyl group, non-genuine crystallographic groups define non-trivial extensions of those Shepard-Todd groups that complexify a finite irreducible Weyl group. Faithful to history, it is the non-genuine class, back then under the name of complex crystallographic Coxeter (CCC) \cite{BS1978}, that set of the theory of infinite complex reflection groups in motion. An early remarkable application of CCC groups to the theory of Frobenius structures can be found in \cite{Dubrovin1996}.
\end{remark}

Let us first focus on the structure of the new diagrams.  
Comparing Table \ref{tab:nongenuine} against \ref{tab:genuine}, having moduli now requires one further additional node: the linear part's Coxeter diagram, of finite type $X\in\{A,C\}$, is affine-extended \emph{twice} to type $X^{(1)}$ like an inkblot. For $Q$ the root lattice, this visually encodes the group's signature structure
\begin{equation*}
    W^\alpha=\mathrm{Lin}(W)\ltimes\mathrm{Tran}^\alpha(W)=W(X)\ltimes Q(X\check{ \ })^{\times2},
\end{equation*}
whose translation part consists of two isomorphic lattices $Q(X\check{ \ })=\Lambda_1\simeq\Lambda_2=\alpha\Lambda_1$. Notice that the two additional nodes are precisely those not shared by the affine extensions, and connect via the new $x$-shaped lace. The shape represents an ``e$x$change rule'': even if the two additional generators satisfy no Coxeter relations, a special pair $(w_1,w_2)$ of equivalent words of same length exists such that each word contains each additional generator exactly once---but in exchanged order. We named $w_1=w_2$ the lace's $x$-relation. As its shape further suggests, the new lace must be considered as non-simple. 
Remarkably, these diagrams unlock straightforward descriptions of the commutator factor group (\Cref{tab:abel}), number of generating reflections (\Cref{prop:min-gen}), and conjugacy classes of hyperplanes (\Cref{prop:conj-class}), with all three features read off exactly à la Coxeter. In particular, both presentations are \emph{minimal} in the sense of number of generators.  

As this diagrammatics suggests, the presentation's signature facet is the $x$-relation. We point out two striking features of it. On the one hand, in either side of the relation, only the generators (on top of the two additional ones) spanned by any $3$-cycle\footnote{A cycle's segment must `` tangle'' the reflections: commutation (no lace) and independence ($\infty$-lace) are disallowed.} involving the exchange lace appear. On the other hand, despite all generators being involutions, inverse letters appear explicitly. 
The latter is a topological manifestation: for the presentation to have any hope in validating the braid theorem, the $x$-relation must be written as if reflections were braid generators. However, being the $x$-relation not Coxeter, no a priori way to do so is known. Nevertheless, a deductive strategy is offered by an explicit description of the group's regular orbit space:
\begin{lemma}
    Let $W^\alpha_n$ be an infinite family of non-genuine Steinberg crystallographic complex reflection groups. Then, the regular orbit space of $W^\alpha$ is a configuration space for all $n$.
\end{lemma}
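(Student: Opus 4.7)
The plan is to treat the two families separately, exploiting the semidirect product structure $W^\alpha = \mathrm{Lin}(W^\alpha) \ltimes \mathrm{Tran}^\alpha(W)$ to split the quotient $N_{W^\alpha}/W^\alpha$ in two stages: first mod out by translations to land on an intermediate complex torus, then mod out by the finite linear part. In both cases $\mathrm{Tran}^\alpha(W) = \Lambda \oplus \alpha\Lambda$ for a suitable root lattice $\Lambda$, so the intermediate quotient $T := V/\mathrm{Tran}^\alpha(W)$ is a product of copies of the elliptic curve $E_\alpha := \complex/(\integer + \alpha\integer)$. Concretely, realize $[G(1,1,n)]^\alpha$ on $V = \{z \in \complex^n : \sum z_i = 0\}$ with $S_n$ permuting coordinates and $\Lambda = Q(A_{n-1})$; and $[G(2,1,n)]^\alpha_1$ on $V = \complex^n$ with $B_n$ acting by signed permutations and $\Lambda = Q(C_n) = \integer^n$.

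Next I would track how the reflection hyperplanes descend to $T$. In type $A$ the hyperplanes $\{z_i - z_j = \lambda\}$ collapse to the $n(n-1)/2$ diagonals $\{z_i = z_j\}$ inside the trace-zero subtorus of $E_\alpha^n$; removing these and taking the $S_n$-quotient yields the unordered configuration space of $n$ points on $E_\alpha$ with fixed center of mass, itself a legitimate configuration space as a slice of $\mathrm{UConf}_n(E_\alpha)$. In type $C$ there are two flavors of reflection: signed swaps with hyperplanes $\{z_i = \pm z_j + \lambda\}$, descending to the loci $\{z_i = \pm z_j\}$ in $E_\alpha^n$; and sign-flips $z_i \mapsto -z_i + \lambda$ with hyperplanes $\{2z_i = \lambda\}$, descending to the $2$-torsion of the $i$-th factor. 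The abelian slice $(\integer/2)^n \subset B_n$ then collapses each factor $E_\alpha$ onto $E_\alpha/{\pm 1} \simeq \mathbb{P}^1$, whose four branch points coincide exactly with the descended sign-flip locus, so the $(\integer/2)^n$-quotient of the hyperplane complement is $(\mathbb{P}^1 \setminus \{4\ \text{pts}\})^n$ minus the collapsed diagonals $\bar{z}_i = \bar{z}_j$. Quotienting finally by $S_n$ identifies the regular orbit space with the unordered configuration space of $n$ points on the four-punctured sphere $\Sigma_{0,4}$.

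The hard part will be the type $C$ step: one must verify that the $\mathrm{Tran}^\alpha(W)$-orbit of the sign-flip hyperplane $\{z_i = 0\}$ fills out \emph{exactly} the $2$-torsion of $E_\alpha$ in the $i$-th factor---no more, no less---so that the double cover $E_\alpha \to \mathbb{P}^1$ absorbs these loci as its branch points and no residual punctures remain. A direct lattice computation with $\Lambda = \integer^n$ and $\alpha\Lambda$ settles this, and a parallel (but much lighter) verification handles the trace-zero bookkeeping in type $A$. Once these identifications are in place, in both cases the regular orbit space is manifestly an unordered configuration space on a finite-type Riemann surface---$E_\alpha$ in type $A$ (with the center-of-mass constraint) and $\Sigma_{0,4}$ in type $C$---as claimed. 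This geometric picture should then drive the symbolic deduction of the $x$-relation, by reading off explicit loops in the configuration space and translating them into words in the would-be generating reflections.
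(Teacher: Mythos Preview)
Your proposal is correct and follows the same geometric route as the paper. The paper itself does not prove this lemma in one place: for $n=1$ it computes directly that the quotient is $\Sigma_{0,4}$ (via the elliptic involution, citing Dubrovin), for $[G(2,1,n)]^\alpha_1$ with $n\geq2$ it defers to Malle's argument to obtain $\mathrm{UC}_n(\Sigma_{0,4})$, and for $[W(A_{n-1})]^\alpha$ it invokes Ion--Roller to identify the regular orbit space with the space $\mathrm{SUC}_n(\Sigma_{1,0})$ of \emph{special} unordered toric configurations---exactly your ``fixed center of mass'' slice. Your two-step quotient (translations first, then the finite linear part), your tracking of the reflection loci to $2$-torsion in type $C$, and your use of the Weierstrass double cover $E_\alpha\to\mathbb{P}^1$ are precisely the content behind those citations; you are simply unpacking what the paper outsources. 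One cosmetic point: the lattice you want in type $C$ is $Q(C_n\check{\ })=Q(B_n)=\integer^n$, matching the paper's explicit $\tau_{2i-1}=e_i$, rather than $Q(C_n)$ as written.
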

 In particular, type $A$ corresponds to special configurations of $n$ points on the torus \cite{IR2025}, while type $C$ corresponds to configurations of $n$ points on the four-punctured sphere (see \Cref{sec:braid}). 
With a presentation available for both configuration spaces, we succeed in unraveling the exact ``topological form'' of each $x$-relation as reported in \Cref{tab:nongenuine}. Notice that no square of a generator is involved.
These topology-driven expressions, for which we also provide a transparent geometric understanding in the language of configuration spaces (Figures \ref{fig:ell-braid} and \ref{fig:braid2}), are foundational to our second main result, which completes the extension of \Cref{thm:genuine-pres} to the non-genuine setting:
\begin{theorem}\label{thm:second}
    Both reflection presentations in \Cref{tab:nongenuine} achieve the braid theorem.
\end{theorem}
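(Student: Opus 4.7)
The plan is to establish the isomorphism $\A(W^\alpha_n) \simeq \B(W^\alpha_n)$ by lifting each abstract generator of the presentations of \Cref{tab:nongenuine} to a canonical braid loop in the regular orbit space $N_{W^\alpha}/W^\alpha$. Since that orbit space has just been identified with a configuration space---$n$ points on the torus in type $A^{(1)}$, $n$ points on the four-punctured sphere in type $C^{(1)}$---existing presentations of those configuration spaces' fundamental groups provide a natural comparison target.

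The first step is to fix, for every reflection generator, a specific loop in $N_{W^\alpha}/W^\alpha$ whose $W^\alpha$-image is the corresponding reflection. Standard half-twists between adjacent points realise the linear-part generators, while the two extra nodes that distinguish our diagram from a purely Coxeter one are realised as the two lattice translation loops on the torus, or as half-twists threading the punctures of the sphere, in the spirit of \Cref{fig:ell-braid,fig:braid2}. The Coxeter-type relations in \Cref{tab:nongenuine} then reduce to local braid isotopies, whereas the $x$-relation holds essentially by construction, its topological form having been extracted precisely from this geometry. Altogether this yields a well-defined homomorphism $\varphi : \A(W^\alpha_n) \to \B(W^\alpha_n)$.

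For surjectivity of $\varphi$ I would invoke a known presentation of $\pi_1$ of the configuration space. In type $A^{(1)}$, a Bellingeri-type presentation for $\pi_1(\mathrm{Conf}_n(T^2))$ supplies generators that can be rewritten as words in the $\varphi$-images of the abstract generators. In type $C^{(1)}$, the analogous role is played by the Fadell--Neuwirth fibration with base $\mathrm{Conf}_{n-1}(S^2\setminus\{4\text{ pts}\})$ and fibre $S^2\setminus\{n+3\text{ pts}\}$, reducing surjectivity to a base case whose fundamental group is free on three generators---matched exactly by the three additional nodes that the double affinization contributes on top of the finite $C$ diagram.

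Injectivity is the main obstacle. I would exploit the natural short exact sequence $1 \to \pi_1(N_{W^\alpha}) \to \B(W^\alpha_n) \to W^\alpha_n \to 1$ together with its algebraic counterpart $1 \to K_n \to \A(W^\alpha_n) \to W^\alpha_n \to 1$, the latter obtained by imposing the order relations on the generators. A Reidemeister--Schreier rewriting along a transversal of $\Lin$ in $W^\alpha_n$ expands every Artin relation, including the $x$-relation, into a family of relations among generators of the pure Artin kernel $K_n$; injectivity then reduces to checking that this expanded family generates exactly the relations of the configuration pure braid group. The delicate point, absent in the genuine case settled by Malle and Puente, is the interaction of the $x$-relation with the modulus $\alpha$: one must track the second lattice copy $\alpha\Lambda_1$ throughout the rewriting, and this parameter-dependent bookkeeping is where the technical subtleties are expected to concentrate.
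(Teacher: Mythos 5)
There is a genuine gap, concentrated in the injectivity step and in the choice of comparison target in type $A$. The paper never needs your exact-sequence machinery: it identifies $\mathrm{Br}(W^\alpha_n)$ with $\pi_1$ of a configuration space (the four-punctured sphere in type $C$, and the \emph{special} unordered toric configurations $\mathrm{SUC}_n(\Sigma_{1,0})$ of Ion--Roller in type $A$), takes an explicit known presentation of that fundamental group (Lambropoulou's $B_{4,n}$ plus a closedness relation, resp.\ Ion--Roller's presentation), and then writes down explicit assignments in \emph{both} directions between those generators and the Artin generators of \Cref{tab:nongenuine}, verifying every relation of each presentation inside the other by direct word manipulation (Lemmas \ref{lem:braidfourzero} and \ref{lem:braidonezero}). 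Since the two maps are mutually inverse on generators, injectivity and surjectivity are automatic, and the verification is finite. Your plan replaces this by a one-way geometric homomorphism, surjectivity via fibrations, and injectivity via Reidemeister--Schreier comparison of $K_n=\ker(\mathrm{Ar}(W^\alpha_n)\to W^\alpha_n)$ with the pure braid group of the configuration space. But the transversal there is indexed by the infinite group $W^\alpha_n$ (it contains the rank-$2n$ lattice), so the rewriting produces infinitely many generators and relations, and you would additionally need an explicit presentation of the pure braid group of the relevant configuration spaces---including the pure braid group of \emph{special} toric configurations, which is not off-the-shelf---against which to match them. You acknowledge this is ``where the technical subtleties are expected to concentrate'' but give no mechanism for carrying it out; that unresolved comparison is the entire content of the theorem, so the proposal is a programme rather than a proof.

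Two further points. In type $A$ your stated target, a Bellingeri-type presentation of $\pi_1(\mathrm{Conf}_n(T^2))$, is the wrong group: the regular orbit space of $[W(A_{n-1})]^\alpha\simeq W(A^{(1,1)}_{n-1})$ is the space of configurations whose points multiply to the identity of the torus, and it is precisely the Ion--Roller identification and presentation for $\mathrm{SUC}_n(\mathbb{T})$ that the paper needs; the full torus braid group differs from this. Finally, asserting that ``the $x$-relation holds essentially by construction'' begs the question: the precise topological form of the $x$-relation---which generator carries the inverse letters in \Cref{tab:nongenuine}---is exactly what must be established, and in the paper it is determined a posteriori through the explicit two-way verification, not assumed from the geometry.
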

\begin{remark}\label{rmk:conj}
As it happens, the braid presentation in type $A$ matches the central evaluation of van der Lek's \emph{extended Artin group} $\tilde{\mathrm{A}}\mathrm{r}(A^{(1)}_n)$. Following his thesis \cite{vanderLek1983}, this is an \emph{abstract} group obtained by deleting order relations and same-index ``pushrelations'' from a special presentation of Saito-Takebayashi's \cite{ST1997} hyperbolic (central) extension $\tilde{W}(A^{(1,1)}_n)\simeq W(A_n^{(1)})\ltimes Q(A_n^{(1)})$ of the elliptic Weyl group $W(A^{(1,1)}_n)\simeq W(A_n)\ltimes Q(A_n)^{\times2}$. Crucially, the extended Artin group matches the fundamental group of the regular orbit space for $\tilde{W}(A^{(1,1)}_n)$. Actually, this holds for any finite type $X$ and should be thought of as a ``coarser'' braid theorem---being the removal not limited to the order relations. As the generator of the center remains central in the extended Artin group, one is invited to ask whether the central evaluation of $\tilde{\mathrm{A}}\mathrm{r}(X^{(1)})$ accordingly matches the braid group of $W(X^{(1,1)})$. 
This statement appears to be specialists' folklore: failing to find any actual proof in the literature, we leave it as Conjecture \ref{conj:vdL}. While waiting for a systemic argument, \Cref{lem:braidonezero} gives an explicit proof in type $A$ and \Cref{rmk:IonSahi2} confirms type $C$.
\end{remark}

At last, the lemma's topological insight suffices to complete our initial program: for any rank $n$, the missing GDAHA of type $D^{(1)}_4$ deforms precisely the configuration space in type $C$ corresponding to $[G(2,1,n)]_1^\alpha$, and the isomorphism we constructed in types $E^{(1)}_{6,7,8}$ admits the analogous generalization.
En passant, we also confirm that the deformation in type $A$ extends the identification we proved in topology: namely, the generic Hecke algebra of $[W(A_{n-1})]^\alpha$, $n\geq3$, matches the central evaluation of the EHA in type $A_{n-1}^{(1,1)}$ .

All considered, we obtain our third and final main result:
\begin{theorem}\label{thm:third}
    Let $G[d,p,n]$ denote the Shepard-Todd infinite family. For $p=1$ and natural lattice of label $k=1$, the generic Hecke algebras deforming the Steinberg infinite families of crystallographic complex reflection groups are isomorphic to the GDAHAs of affine type.  
\end{theorem}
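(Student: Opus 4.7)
The plan is to reduce the theorem to a presentation-level comparison, leveraging the braid theorems (\Cref{thm:genuine-pres,thm:second}) and the explicit identifications of the regular orbit spaces established earlier in the paper. By the braid theorems, the generic Hecke algebra of each Steinberg family $W$ in the relevant list is a quotient of $\mathbb{Z}[q^{\pm 1}][\B(W)]$ by polynomial deformations of the order relations on the generating reflections, with all other relations---Coxeter relations, and in the non-genuine case the $x$-relation---passing verbatim to the braid group. Following \cite{EGO2006}, each affine GDAHA is analogously a quotient of the group algebra of $\pi_1$ of a certain star-shaped topological space by polynomial relations on a distinguished set of generators. Hence the theorem reduces, for each family, to two checks: (i) an isomorphism of braid groups sending the reflection-presentation generators to the GDAHA generators, and (ii) matching of the Hecke deformations with the GDAHA defining relations under this identification.

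The three genuine families $d=3,4,6$ can be treated uniformly. The presentations of $[G(d,1,n)]_1$ in \Cref{tab:genuine} coincide as Coxeter diagrams up to the label of a single extra node, so \Cref{thm:genuine-pres}(b) yields a common braid group. The isomorphism \eqref{E-typeiso} identifies this braid group with the fundamental group of the topological space underlying the affine GDAHAs of types $E_6^{(1)}$, $E_7^{(1)}$, $E_8^{(1)}$; the three label choices $d=3,4,6$ correspond to the three GDAHAs, and once parameters are normalized the respective Hecke deformations of the order relations align with the GDAHA defining relations by direct comparison.

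For the non-genuine family $[G(2,1,n)]_1^\alpha$, the lemma above identifies its regular orbit space with the configuration space of $n$ points on the four-punctured sphere, which is exactly the topological space underlying the GDAHA of type $D_4^{(1)}$ (equivalently, Sahi's DAHA, or the EHA of type $C_n^{(1,1)}$). By \Cref{thm:second}, stripping the single order relation from the presentation in \Cref{tab:nongenuine} while retaining the $x$-relation presents this braid group, with generators that match the canonical GDAHA ones; the Hecke deformation of the order relation then recovers the full defining relations of the GDAHA.

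The hardest step is this $D_4^{(1)}$ case, where the $x$-relation has no direct counterpart among the GDAHA defining relations, and one must verify that it becomes automatic after the deformation. The key observation is that, being written in pure braid-group letters with no squares of generators, the $x$-relation is already an identity in $\B(W^\alpha)$ and therefore survives any polynomial deformation of the order relations; under the identification of generators with the GDAHA ones, it descends tautologically to an identity in the GDAHA. The remaining parameter calibration---matching each Hecke-deformed generator with its GDAHA counterpart---is then a case-by-case verification, facilitated by the uniform structure of the diagrams in \Cref{tab:genuine,tab:nongenuine}, and can be completed generator by generator.
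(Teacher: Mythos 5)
Your proposal follows essentially the same route as the paper: both reduce the statement to the braid-group identifications already established (\Cref{lem:braidfourzero} and its $\Sigma_{0,3}$ mirror realized by \eqref{E-typeiso}) and then match the deformed order relations with the GDAHA polynomial relations under an explicit parameter specialization, exactly as in the proof of \Cref{thm:GDAHA}. One small correction: the $x$-relation does have a direct counterpart among the GDAHA defining relations---namely the elliptic braid relation $U_iT_1^{-1}U_jT_1=T_1^{-1}U_jT_1U_i$ for the pair of additional generators, which is precisely what \Cref{lem:braidfourzero} translates---so no separate ``automatic after deformation'' argument is needed beyond the braid theorem you already invoke.
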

Since Popov's classification involves either infinite families or sporadic groups, and Steinberg families $[G(d,p,n)]_k$ come as subgroups of $[G(d,1,n)]_1$ \cite{Malle1996}, we should rethink GDAHA as nothing but the most ``general'' chapter in the Hecke-deformation theory of crystallographic complex reflection groups. The rest of the theory's book is waiting to be written.
\begin{remark}
    An obvious first step forward would be to define, by generators and relations, the Hecke algebras attached to the remaining families and sporadic groups, with a preliminary hunt for a reflection presentation achieving the braid theorem. Alternatively, one could attempt a more conceptual approach based on KZ connections akin to that developed by Broué, Malle, and Rouquier \cite{BMR1998}. On this note, one naturally asks whether the ``BMR freeness conjecture'', namely the algebraic flatness of the deformation, extends to the complex crystallographic world. Considering that the original statement, now finally confirmed for all Shepard-Todd groups, kept the community engaged for two decades, we expect such relaunch of the conjecture to be of great interest. E.g., this conjecture for affine GDAHAs was stated right when these algebras were first introduced in 2006 \cite{EGO2006}, and is open since. 
\end{remark}

The present paper is organized as follows:

\Cref{sec:pres} contains the geometric derivation of the new reflection presentations, which are then used to prove the anticipated triple of properties for the non-genuine infinite families;

\Cref{sec:braid} wades through topology to prove the braid theorem for both presentations;

\Cref{sec:Hecke} concludes the paper, introducing the Hecke algebra before proving the sought-after reboot of GDAHA.

Finally, Appendices \ref{app1} and \ref{app2} visualize, respectively in the non-genuine and genuine cases, the reflection presentations achieving the braid theorem.

\paragraph{Acknowledgments}
\!\!\!The author is truly grateful to both Yoshihisa Saito and Bogdan Ion for the fruitful discussions. This research was funded by the Japan Society for the Promotion of Science [PE24720], and Fondazione Cariparo [C93C22008360007] via University of Padua.

\paragraph{Conventions}
\!\!\!The following alphabetic notations are used throughout the paper:
\begin{itemize}
    \item Greek letters $\sigma, \tau, \upsilon$ for reflection elements;
    \item Latin lowercase letters $s, t, u$ for braid elements;
    \item Latin uppercase letters $S, T, U$ for Hecke elements;
    \item blackboard letters $\mymathbb{s}, \mymathbb{t}, \mymathbb{u}$ for Hecke parameters.
\end{itemize}

Moreover, when depicting $n$ braids on a thickened surface $\Sigma_{g,s}\times I$ with strata in the interval $I$ having genus $g$ and $s$ holes, we use the following pictures:
\begin{itemize}
    \item top view (left) with same-colored edges identified, and side view (right) for $\Sigma_{0,s}$ as
\begin{figure}[!ht]
    \centering
    \includegraphics[height=8em]{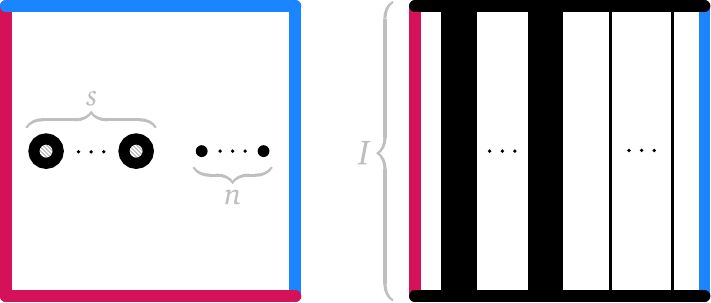}
\end{figure}
\item top view (left) with opposite edges identified to form the {\color{2blue}longitude} and {\color{2red}meridian} cycles, meridional side view (center), and longitudinal side view (right) for $\Sigma_{1,0}=\mathbb{T}$ as
\begin{figure}[!h]
    \centering
    \includegraphics[height=8em]{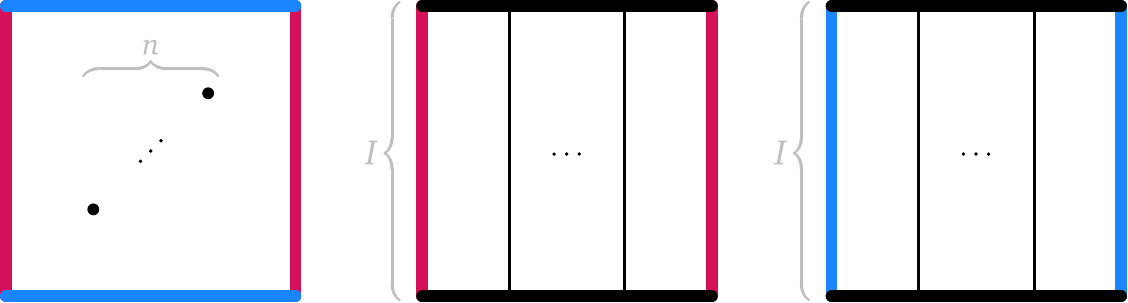}
\end{figure}
\end{itemize}

\section{Coxeter-like minimal presentations}\label{sec:pres}

We start the section by deriving reflection presentations for the pair of non-genuine families \eqref{non-genuine}. Let us preliminarily point out that, as an irreducible group, $G(1,1,n)\simeq W(A_{n-1})$ by restriction to the standard subspace $\langle e_1-e_2,\ldots,e_{n-1}-e_{n}\rangle\simeq\complex^{n-1}$ \cite[§4]{PS2019}. In light of this clarification, we hereafter denote by $[W(A_{n-1})]^\alpha$ the corresponding family. Note also that $G(2,1,n)\simeq W(C_n)$ and $A_1 = C_1$ manifests in the classification as $[W(A_1)]^\alpha\simeq[G(2,1,1)]_1^\alpha$.

Explicitly, the presentations claimed in  \Cref{thm:first} read as in
\begin{theorem}The following isomorphisms hold:
\begin{equation}\label{A1-pres}
     [G(2,1,1)]^\alpha_1 \ \simeq \ [W(A_1)]^\alpha \ \simeq \ \big\langle\ \sg{1},\sg{2},\sg{3}\ \big|\ \sg{1}^2=\sg{2}^2=\sg{3}^2=(\sg{1}\sg{2}\sg{3})^2=1 \ \big\rangle,
\end{equation}
\begin{equation}\label{C-pres}
    [G(2,1,n)]^\alpha_1 \ \simeq \ \left\langle\ \sg{1},\cdots,\sg{n+2}\ \left|\ \begin{array}{c}
        \sg{1}^2=\sg{2}^2=\cdots=\sg{n+2}^2=(\sg{1}\cdots\sg{n+2}\sg{n}\cdots\sg{2})^2=1,\\
        \sg{i}\sg{j}=\sg{j}\sg{i}  \text{\hspace{1.5em}for\hspace{.5em}}  i-j\neq1,\ 1\leq i,j\leq n,\hfill\\\phantom{\sg{i}\sg{j}=\sg{j}\sg{i}\hspace{1.75em}}\text{or\hspace{.5em}}i=n+1,n+2,\ 1\leq j\leq n-1,\hfill\\
        \sg{i}\sg{j}\sg{i}=\sg{j}\sg{i}\sg{j}  \text{\hspace{1.5em}for\hspace{.5em}}  i-j=1 ,\ 2\leq i,j\leq n,\hfill\\
        \sg{i}\sg{j}\sg{i}\sg{j}=\sg{j}\sg{i}\sg{j}\sg{i} \text{\hspace{1.5em}for\hspace{.5em}}  i=1,\ j=2,\hfill\\
        \phantom{\sg{i}\sg{j}\sg{i}\sg{j}=\sg{j}\sg{i}\sg{j}\sg{i}\hspace{1.75em}}\text{or\hspace{.5em}}i=n+1,n+2,\ j=n,\hfill\\        \sg{n}\sg{n+1}\sg{n}^{-1}\sg{n+2}=\sg{n+2}\sg{n}\sg{n+1}\sg{n}^{-1}\hfill    
    \end{array}\right.\right\rangle,
\end{equation}
\begin{equation}\label{A-pres}
    [W(A_{n-1})]^\alpha \ \simeq \ \left\langle\ \sg{1},\cdots,\sg{n+1}\ \left|\ \begin{array}{c}
    \sg{1}^2=\cdots=\sg{n+1}^2=(\sg{1}\cdots\sg{n+1}\sg{n-1}\cdots\sg{2})^2=1,\hfill\\
    \sg{i}\sg{j}=\sg{j}\sg{i}  \text{\hspace{1.5em}for\hspace{.5em}}  i-j\neq1,\ 1\leq i,j\leq n-1,\hfill\\        \phantom{\sg{i}\sg{j}=\sg{j}\sg{i}\hspace{1.75em}}\text{or\hspace{.5em}}i=n,n+1,\ 2\leq j \leq n-2,\hfill\\
        \sg{i}\sg{j}\sg{i}=\sg{j}\sg{i}\sg{j}  \text{\hspace{1.5em}for\hspace{.5em}}  i-j=1 ,\ 1\leq i,j\leq n-1,\hfill\\    \phantom{\sg{i}\sg{j}\sg{i}=\sg{j}\sg{i}\sg{j}\hspace{1.75em}}\text{or\hspace{.5em}}i=n,n+1,\ j=1,n-1,\hfill\\
        \sg{n+1}\sg{n-1}\cdots\sg{1}\sg{n}\isg{1} =\sg{n-1}\isg{n}\isg{n-1}\cdots\isg{1}\isg{n+1}   
    \end{array}\right.\right\rangle.\vspace{.75em}
\end{equation}
\end{theorem}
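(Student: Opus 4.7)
The plan is a presentation-verification argument: for each of the three groups $W^\alpha\in\{[W(A_1)]^\alpha,\ [W(A_{n-1})]^\alpha,\ [G(2,1,n)]^\alpha_1\}$, exhibit reflections inside the concrete semidirect-product realization $W^\alpha=\mathrm{Lin}(W)\ltimes(\Lambda\oplus\alpha\Lambda)$ that satisfy the listed relations, so as to obtain a surjection $\phi\colon\Gamma\to W^\alpha$ from the abstractly-presented group $\Gamma$; then prove $\phi$ is injective by matching the semidirect-product structure on the two sides.

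\emph{Choice of generators.} For type $A$ (resp.\ $C$), take $\sg{1},\dots,\sg{n-1}$ (resp.\ $\sg{1},\dots,\sg{n}$) as the standard Coxeter generators of the linear part $W(A_{n-1})$ (resp.\ $W(C_n)$). Add one further reflection realizing the simple affine reflection of the once-affinized Weyl group $W(X)\ltimes Q(\check X)$; this is classical and corresponds to a new node in the Coxeter-like diagram. Finally, pick the last generator as the \emph{same} affine reflection translated by $\alpha$, so that together with the linear part it produces the second lattice copy $\alpha Q(\check X)$. The rank-one case $[W(A_1)]^\alpha\subset\integer/2\ltimes\complex$ is handled by three point-reflections $\sg{i}(z)=2c_i-z$ with $c_1,c_2,c_3$ so placed that $c_2-c_1$ and $c_3-c_1$ span the target lattice $\integer+\alpha\integer$.

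\emph{Relation checks.} All Coxeter-type and commutation relations among linear-part generators, or between one ``affine'' generator and a distant $\sg{j}$, are inherited from the presentation of the affine Weyl group of type $X^{(1)}$. The extra order relation $w^2=1$ is verified by identifying $w$ as a specific reflection, which reduces to a short affine computation. In the rank-one case, $(\sg{1}\sg{2}\sg{3})^2=1$ is automatic, as the composition of three point-reflections of $\complex$ is itself a point-reflection. The $x$-relation is the one nontrivial check: both sides must be shown to represent the same element of $W^\alpha$, namely a specific translation built from $\Lambda\oplus\alpha\Lambda$; this boils down to checking that conjugating the ``first-lattice'' affine letter by the exchange-lace word equals the corresponding conjugate of the ``second-lattice'' letter, twisted by $\alpha$.

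\emph{Injectivity.} Inside $\Gamma$, distinguish a normal subgroup $T$ generated by the products of each of the two affine letters with their conjugates under the linear-part $\sg{i}$, meant to cover $\Lambda\oplus\alpha\Lambda$ under $\phi$. Using the Coxeter and $x$-relations, argue that $T$ is abelian of rank $2\dim V$ and that the quotient $\Gamma/T$ is the finite Coxeter group on $\sg{1},\dots,\sg{\dim V}$ alone; then $\phi$ respects the resulting semidirect decomposition $\Gamma\simeq(\Gamma/T)\ltimes T$, forcing $\ker\phi=1$. I expect this last step to be the main obstacle: the $x$-relation is a single equation, yet combined with the Coxeter word problem in the linear part it must force $T$ to have rank exactly $2\dim V$, neither more (which would block embedding into the translation lattice) nor less (which would obstruct surjectivity).
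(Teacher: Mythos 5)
Your skeleton---realize the abstract generators as explicit affine reflections to get a surjection $\phi\colon\Gamma\to W^\alpha$, then prove injectivity by matching the semidirect-product structure $\mathrm{Lin}\ltimes\mathrm{Tran}$ on both sides---is essentially the paper's strategy, phrased in the opposite direction: the paper starts from the semidirect-product presentation (Coxeter generators of the linear part plus a lattice basis $\tu{1},\ldots,\tu{2\dim V}$ with action and commutation relations) and performs a change of generators, checking that the old relations follow from the new ones. Your surjectivity half (verifying the Coxeter relations, the extra order relation, and the $x$-relation inside $W^\alpha$ by direct affine computation, and the rank-one observation that a product of three point reflections is a point reflection) is sound and matches what the paper does with explicit matrices.

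The genuine gap is the injectivity half, which you explicitly defer as ``the main obstacle'' without giving an argument---and that is exactly where the content of the theorem lies. Concretely, you must show, \emph{from the presented relations alone}, two things about the candidate lattice elements (in type $C$, $\tu{2i-1}=\sg{i+1}\cdots\sg{n+1}\cdots\sg{1}\cdots\sg{i}$ and $\tu{2i}=\sg{i+1}\cdots\sg{n}\sg{n+2}\sg{n}\cdots\sg{1}\cdots\sg{i}$, and the analogous words in type $A$): first, that conjugation by each $\sg{k}$ sends each $\tu{j}$ back into the subgroup generated by the $\tu{j}$'s, so that $T$ is generated by these $2\dim V$ elements as a subgroup and not merely as a normal closure; second, and crucially, that the $\tu{j}$'s commute pairwise. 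Neither is free: the commutations are precisely where the $x$-relation and the extra order relation $(\sg{1}\cdots\sg{n+2}\sg{n}\cdots\sg{2})^2=1$ must be invoked, and the paper's proof consists mostly of the explicit word manipulations deriving $\tu{i}\tu{j}=\tu{j}\tu{i}$ from these two relations together with the Coxeter ones; without that derivation your assertion that ``$T$ is abelian of rank $2\dim V$'' is unsupported and the decomposition argument cannot start. By contrast, the rank worry you single out is the cheap part: once commutativity and the action relations are in place, $T$ is an abelian group generated by $2\dim V$ elements surjecting onto $\mathbb{Z}^{2\dim V}$, hence free of exactly that rank since $\mathbb{Z}^{2\dim V}$ is Hopfian, and $\Gamma/T$ is a quotient of the finite Coxeter group surjecting onto $\mathrm{Lin}(W^\alpha_n)$, hence isomorphic to it, so $\ker\phi=1$ follows. (A minor slip in rank one: your point reflections generate translations by $2(c_2-c_1)$ and $2(c_3-c_1)$, so it is these doubled differences that must span $\mathbb{Z}+\alpha\mathbb{Z}$; harmless for the abstract isomorphism, but worth correcting.)
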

\begin{proof}
For $W_n^\alpha$ a non-genuine family of crystallographic complex reflection groups, a first presentation appears by adjoining to that of $\Lin$ a basis of the lattice $\Tran$ as additional generators, together with both the relations capturing the action $\Lin\triangleright\Tran$ and commutations in $\Tran$. 
This intermediate presentation is then finalized by replacing the additional generators with suitable reflections, while carefully keeping track of the relations.
We denote an element in $W_n^\alpha$ as $(g\,|\,t)$, for $g\in\Lin$ and $t\in\Tran$.

\paragraph{Case $W^\alpha=[G(2,1,1)]^\alpha_1\simeq[W(A_1)]^\alpha$\\}
We start with the simplest $[G(2,1,1)]^\alpha_1\simeq\integer_2\ltimes\integer^2$, whose linear part's Coxeter diagram
\begin{figure}[!h]
    \centering
    \includegraphics[height=1.25em]{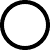}
\end{figure}\vspace{-.25em}
\\of type $A_1$ corresponds to the \emph{distinguished reflection} \cite{BMR1998} $\sg{1}:=e^{2\pi i/2}=-1$, and translation part's lattice can be written as $\Lambda^\alpha:=\langle\tu{1},\tu{2}\rangle=\langle1,\alpha\rangle$.
Defining the new pair of reflections
\begin{equation}\label{G2sigmas}
    \sg{2}=\tu{1}\sg{1}=(-1\,|\,1), \qquad \sg{3}=\tu{2}\sg{1}=(-1\,|\,\alpha), 
\end{equation}
it is immediate to see that no relations occur between any two of $\{\sg{1},\sg{2},\sg{3}\}$, in that their products belong to the lattice.
E.g., $\sg{1}\sg{2}=(1,-1)=\tu{1}^{-1}$ and, $\forall\ i\in\integer$, $(\sg{1}\sg{2})^i$ can match neither the identity nor any reflections. Nevertheless, $\sg{1}\sg{2}\sg{3}=(-1\,|\,\alpha-1)$ is itself a reflection and we get precisely the Coxeter diagram\vspace{-1em}
\begin{figure}[!h]
    \centering
    \includegraphics[height=6em]{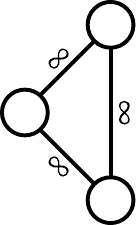}
\end{figure}\vspace{-.25em}
\\in \Cref{tab:nongenuine}---up to the order relation 
\begin{equation}\label{extra}
    (\sg{1}\sg{2}\sg{3})^2=1.
\end{equation}
It is then straightforward to check that all relations among $\sg{1},\sg{2},\tu{1},\tu{2}$ follow from the diagram together with \eqref{extra}, with the latter encapsulating the commutation $\tu{1}\tu{2}=\tu{2}\tu{1}$.

Note that, adding $\sg{4}:=(\sg{1}\sg{2}\sg{3})^{-1}$, the reflection presentation reads equivalently as
\begin{equation}
    \left\langle\ \sg{1},\sg{2},\sg{3},\sg{4}\ \left|\ \begin{array}{c}
        \sg{1}^2=\sg{2}^2=\sg{3}^2=\sg{4}^2=1,\\
        \sg{1}\sg{2}\sg{3}\sg{4}=1\hfill
    \end{array}\right.\right\rangle
\end{equation}
for a tetrahedronal analogue of the triangular diagram and the extra relation swapped to braid type---by a closedness condition, cf. the first relation in presentation \eqref{p1sigma04} at $n=1$.

As expected, $[W(A_1)]^\alpha$ yields the same result. Indeed, for the permutation
\begin{equation}
    \sigma_1=\begin{pmatrix}
        0 & 1 \\ 1 & 0
    \end{pmatrix}
\end{equation}
and the standard $A$-type (double) lattice
\begin{equation}
    \Lambda^\alpha:=\Big\langle\begin{pmatrix}1\\-1\end{pmatrix},\begin{pmatrix}\alpha\\-\alpha\end{pmatrix}\Big\rangle,
\end{equation}
take reflections
\begin{equation}\label{G1sigmas}
    \sg{2}=\tu{1}\sg{1}=\left(\begin{array}{@{}cc|c@{}} 0 & 1 & 1\\1 & 0 & -1\end{array}\right), \qquad \sg{3}=\tu{2}\sg{1}=\left(\begin{array}{@{}cc|c@{}} 0 & 1 & \alpha\\1 & 0 & -\alpha\end{array}\right)
\end{equation}
exactly as before to check analogously that no relations occur beyond the order one
\begin{equation}
    (\sg{1}\sg{2}\sg{3})^2=1.
\end{equation}

\paragraph{Case $W_n^\alpha=[G(2,1,n)]^\alpha_1,\ n\geq2$\\}
We scale formulae \eqref{G2sigmas} in rank. On the one hand, $G(2,1,n)\simeq W(C_n)$ has Coxeter diagram
\begin{figure}[!h]
    \centering
    \includegraphics[height=1.25em]{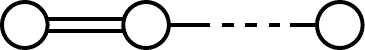}
\end{figure}\vspace{-.25em}
\\with corresponding generators $\sg{1}=\diag(-1,1,\ldots,1)$ and permutation matrices $\sg{i}=(i-1,i)$, $2 \leq i \leq n$. On the other hand, denoting by $\{e_1,\ldots,e_n\}$ the standard basis of $\complex^n$, $\Lambda^\alpha:=\langle\tu{1},\ldots\tu{2n}\rangle$ for the assignments $\tu{2i-1}=e_i$, $\tu{2i}=\alpha e_i$.
The new pair of reflections, reading now as
\begin{align*}
    &\sg{n+1}=\sg{n}\cdots\sg{2}\tu{1}\sg{1}\cdots\sg{n}=
    \left(\begin{array}{@{}cc|c@{}}
         \mymathbb{1}_{n-1} & 0 & 0\\
         0 & -1 & 1
    \end{array}\right),\allowdisplaybreaks\\
    &\sg{n+2}=\sg{n}\cdots\sg{2}\tu{2}\sg{1}\cdots\sg{n}=
    \left(\begin{array}{@{}cc|c@{}}
         \mymathbb{1}_{n-1} & 0 & 0\\
         0 & -1 & \alpha
    \end{array}\right),
\end{align*}
manifestly commute with all but the last permutation matrix, and it is immediate to check that $(\sg{n}\sg{n+1})^4=1=(\sg{n}\sg{n+2})^4$.
Again, $\sg{n+1}\sg{n+2}\in\Lambda^\alpha$ and the previous argument implies no Coxeter relations between them exist, serving the Coxeter diagram\vspace{-.25em}
\begin{equation}\label{inf2Cn}
\begin{matrix}\includegraphics[height=6em]{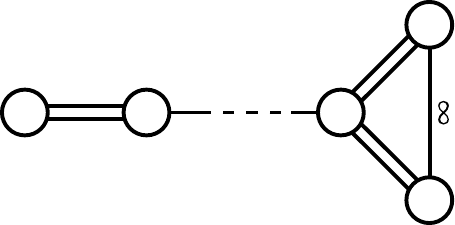}\end{matrix}\vspace{-.5em}
\end{equation}
\!As anticipated, this diagrammatic description is incomplete and fails to encode the whole reflection geometry: explicit matrix computations reveal a missing braid-type quartic relation
\begin{equation}\label{braid}
    \sg{n}\sg{n+1}\sg{n}\sg{n+2}=\sg{n+2}\sg{n}\sg{n+1}\sg{n}
\end{equation}
which, following the Introduction, turns the graph into the Coxeter-like diagram in \Cref{tab:nongenuine}. Which generating reflection conceals its \emph{inverse} braid generator, thus ruling the Table's exact form of the corresponding $x$-relation, is determined a posteriori in the proof of the braid theorem.
Notice that, as anticipated, the reflections appearing on each side of \eqref{braid} are precisely those spanned by the unique $3$-cycle involving the two incident double laces.   
Moreover, one quickly checks that
\begin{equation}
    \sg{1}\cdots\sg{n+2}\sg{n}\cdots\sg{2}=
    \left(\begin{array}{@{}cc|c@{}}
         -1 & 0 & \alpha-1\\
         0 & \mymathbb{1}_{n-1} & 0
    \end{array}\right),
\end{equation}
yields the order relation
\begin{equation}\label{cyclic}
    (\sg{1}\cdots\sg{n+2}\sg{n}\cdots\sg{2})^2=1.
\end{equation}

It is then easy to verify that all relations between $\sg{1},\cdots\sg{n},\tu{1},\cdots,\tu{2n}$, where
\begin{equation*}
    \tu{2i-1}=\sg{i+1}\cdots\sg{n+1}\cdots\sg{1}\cdots\sg{i}, \qquad \tu{2i}=\sg{i+1}\cdots\sg{n}\sg{n+2}\sg{n}\cdots\sg{1}\cdots\sg{i},
\end{equation*}
follow from the Coxeter-like diagram paired with its complementary order relation.
Indeed, the actions $\sigma_i\triangleright\tu{j}$ are straightforward to check. We also give the basic manipulations needed to prove the commutations, for the $n=3$ case involving all core types of relations, as they easily adapt with increasing rank and lattice size:
\begin{align*}
\tu{1}\tu{2}&=\sg{2}\sg{3}(\sg{4}\sg{3}\sg{2}\sg{1}\cdot\sg{2}\sg{3}\sg{5})\sg{3}\sg{2}\sg{1}\overset{\eqref{cyclic}}{=}\sg{2}\sg{3}\sg{5}\sg{3}\sg{2}\sg{1}\sg{2}\sg{3}\sg{4}\sg{3}\sg{2}\sg{1}=\tu{2}\tu{1},\allowdisplaybreaks\\
\tu{1}\tu{3}&=\tu{1}\cdot\sg{2}\triangleright\tu{1}=(\tu{1}\sg{2})\tu{1}\sg{2}=\sg{2}\tu{1}(\tu{2}\sg{2})=\sg{2}\tu{1}\sg{2}\tu{1}=\tu{3}\tu{1},\allowdisplaybreaks\\
\tu{1}\tu{4}&=\tu{1}\cdot\sg{2}\triangleright\tu{2}=\tu{4}\tu{1},\allowdisplaybreaks\\
\tu{1}\tu{5}&=\tu{1}\cdot\sg{3}\triangleright\tu{3}=\tu{1}\sg{3}\tu{3}\sg{3}=\sg{3}\tu{3}\sg{3}\tu{1}=\tu{4}\tu{1},\allowdisplaybreaks\\
\tu{1}\tu{6}&=\tu{1}\cdot\sg{3}\triangleright\tu{4}=\tu{6}\tu{1},\allowdisplaybreaks\\
\tu{2}\tu{3}&=\sg{2}\sg{3}\sg{5}(\sg{3}\sg{2}\sg{3})\sg{4}\sg{3}(\sg{1}\sg{2}\sg{1}\sg{2})=(\sg{2}\sg{3}\sg{2})\sg{5}\sg{3}\sg{4}\sg{2}\sg{3}\sg{2}\sg{1}\sg{2}\sg{1}=\\&=\sg{3}\sg{2}(\sg{3}\sg{5}\sg{3}\sg{4})\sg{2}\sg{3}\sg{2}\sg{1}\sg{2}\sg{1}\overset{\eqref{braid}}{=}\sg{3}\sg{4}\sg{2}\sg{3}\sg{5}(\sg{3}\sg{2}\sg{3})\sg{2}\sg{1}\sg{2}\sg{1}=\\&=\sg{3}\sg{4}(\sg{2}\sg{3}\sg{2})\sg{5}\sg{3}\sg{1}\sg{2}\sg{1}=\sg{3}\sg{4}\sg{3}\sg{2}\sg{1}\sg{3}\sg{5}\sg{3}\sg{2}\sg{1}=\tu{3}\tu{2},\allowdisplaybreaks\\
\tu{2}\tu{4}&=\sg{2}\sg{3}\sg{5}(\sg{3}\sg{2}\sg{3})\sg{5}\sg{3}(\sg{1}\sg{2}\sg{1}\sg{2})=(\sg{2}\sg{3}\sg{2})\sg{5}\sg{3}\sg{5}\sg{2}\sg{3}\sg{2}\sg{1}\sg{2}\sg{1}=\\&=\sg{3}\sg{2}(\sg{3}\sg{5}\sg{3}\sg{5})\sg{2}\sg{3}\sg{2}\sg{1}\sg{2}\sg{1}=\sg{3}\sg{5}\sg{2}\sg{3}\sg{5}(\sg{3}\sg{2}\sg{3})\sg{2}\sg{1}\sg{2}\sg{1}=\\&=\sg{3}\sg{5}(\sg{2}\sg{3}\sg{2})\sg{5}\sg{3}\sg{1}\sg{2}\sg{1}=\sg{3}\sg{5}\sg{3}\sg{2}\sg{1}\sg{3}\sg{5}\sg{3}\sg{2}\sg{1}=\tu{4}\tu{2},\allowdisplaybreaks\\
\tu{2}\tu{5}&=\tu{2}\cdot\sg{3}\triangleright\tu{3}=\tu{2}\sg{3}\tu{3}\sg{3}=\sg{3}\tu{3}\sg{3}\tu{2}=\tu{5}\tu{2},\allowdisplaybreaks\\
\tu{2}\tu{6}&=\tu{2}\cdot\sg{3}\triangleright\tu{4}=\tu{6}\tu{2},
\end{align*}
and those remaining proved similarly.
\begin{remark}\label{rmk:IonSahi1}
    For $n=2$, an alternative minimal presentation was defined in \cite[Theorem 2]{KTY1982} but is far from Coxeter-like, as it requires a \emph{triple} of additional braid-type relations. The Coxeter diagram \eqref{inf2Cn} has already appeared \cite[Figure 2]{IS2020} in the study of extended Artin groups $\tilde{\mathrm{A}}\mathrm{r}(X^{(1)})$, loc. cit. under the name of double affine Artin groups. In particular \cite[Proposition 5.6]{IS2020}, the abstract group $\tilde{\mathrm{A}}\mathrm{r}(C^{(1)})$ is presented, up to a further central generator $\dutchcal{C}$, as a quotient of the Artin group attached to \eqref{inf2Cn} by the very $x$-relation $\sg{n}\sg{n+1}\sg{n}^{-1}\sg{n+2}=\sg{n+2}\sg{n}\sg{n+1}\sg{n}^{-1}$ the authors referred to as ``elliptic braid relation''.
\end{remark}

\paragraph{Case $W_n^\alpha=[W(A_{n-1})]^\alpha,\ n\geq3$\\}
We now scale \eqref{G1sigmas} in rank. On the one hand, $G(1,1,n)\simeq\mathfrak{S}_n$ has irreducible reflection presentation that matches that of $W(A_{n-1})$ via the permutation matrices $\sg{i}=(i,i+1)$, $1 \leq i \leq n-1$, visualized by the Coxeter diagram\vspace{-1em}
\begin{figure}[!h]
    \centering
    \includegraphics[height=1.25em]{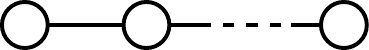}
\end{figure}\vspace{-.25em}
\\of type $A$.
On the other hand, we replace the standard lattice of type $A$, generated by the differences $e_i-e_{i+1}$, with the equivalent $\Lambda^\alpha:=\langle\tu{1},\ldots\tu{2(n-1)}\rangle$ for $\tu{2i-1}=e_1-e_{i+1}$ and $\tu{2i}=\alpha\tu{2i-1}$.
Then, the new pair of reflections is defined as
\begin{align*}
    &\sg{n}=\sg{n-1}\cdots\sg{2}\tu{1}\sg{1}\cdots\sg{n-1}=
    \left(\begin{array}{@{}ccc|c@{}}
         0 & 0 & 1 & 1\\
         0 & \mymathbb{1}_{n-2} & 0 & 0\\
         1 & 0 & 0 & -1
    \end{array}\right),\allowdisplaybreaks\\
    &\sg{n+1}=\sg{n-1}\cdots\sg{2}\tu{2}\sg{1}\cdots\sg{n-1}=
    \left(\begin{array}{@{}ccc|c@{}}
         0 & 0 & 1 & \alpha\\
         0 & \mymathbb{1}_{n-2} & 0 & 0\\
         1 & 0 & 0 & -\alpha
    \end{array}\right),
\end{align*}
and each element is easily checked to turn the diagram affine.
Once again, $\sg{n}\sg{n+1}\in\Lambda^\alpha$ serves the Coxeter diagram\vspace{-1em}
\begin{figure}[!h]
    \centering
    \includegraphics[height=6em]{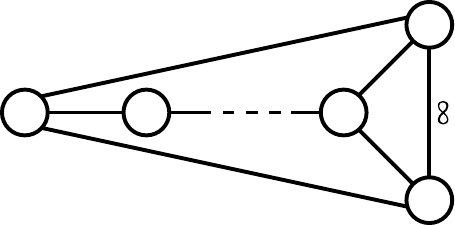}
\end{figure}\vspace{-.25em}
\\whose missing braid-type relation involves now $n+2$ generators per side:
\begin{equation}\label{braid2}
    \sg{n+1}\sg{n-1}\cdots\sg{1}\sg{n}\sg{1}=\sg{n-1}\sg{n}\sg{n-1}\cdots\sg{1}\sg{n+1}.
\end{equation}
For the explicit topological $x$-relation form of \eqref{braid2} delivering the diagram in \Cref{tab:nongenuine}, the analogous remarks made above in type $C$ hold. 
Furthermore, one can quickly check that
\begin{equation}
    \sg{1}\cdots\sg{n+1}\sg{n-1}\cdots\sg{2}=\left(\begin{array}{@{}c|c@{}}
         & \alpha-1\\
       \sg{1}  & 1-\alpha\\
           & 0
    \end{array}\right)
\end{equation}
yields the order relation
\begin{equation}\label{cyclic2}
    (\sg{1}\cdots\sg{n+1}\sg{n-1}\cdots\sg{2})^2=1.
\end{equation}

It is then easy to verify that all relations between $\sg{1},\cdots\sg{n-1},\tu{1},\cdots,\tu{2n}$, where
\begin{equation*}
    \tu{2i-1}=\sg{i+1}\cdots\sg{n}\cdots\sg{1}\cdots\sg{i}, \qquad \tu{2i}=\sg{i+1}\cdots\sg{n-1}\sg{n+1}\sg{n-1}\cdots\sg{1}\cdots\sg{i},
\end{equation*}
follow from the twice-extended diagram together with the two extra relations. Analogously to the previous case, the actions are straightforward to check and it suffices to give the $n=4$ basic manipulations proving the commutations:
\begin{align*}
\tu{1}\tu{2}&=\sg{2}(\sg{3}\sg{4}\sg{3}\sg{2}\sg{1}\cdot\sg{2}\sg{3}\sg{5})\sg{3}\sg{2}\sg{1}\overset{\eqref{cyclic2}}{=}\sg{2}\sg{3}\sg{5}\sg{3}\sg{2}\sg{1}\cdot\sg{2}\sg{3}\sg{4}\sg{3}\sg{2}\sg{1}=\tu{2}\tu{1},\allowdisplaybreaks\\
\tu{1}\tu{3}&=\sg{2}(\sg{3}\sg{4}\sg{3})\sg{2}\sg{1}\cdot\sg{3}\sg{4}\sg{3}\sg{2}\sg{1}\sg{2}=\sg{4}(\sg{2}\sg{3}\sg{2})\sg{4}(\sg{1}\sg{3})\sg{4}\sg{3}\sg{2}\sg{1}\sg{2}=\\&=\sg{4}\sg{3}\sg{2}(\sg{3}\sg{4}\sg{3})\sg{1}\sg{4}\sg{3}\sg{2}\sg{1}\sg{2}=(\sg{4}\sg{3}\sg{4})\sg{2}\sg{3}(\sg{4}\sg{1}\sg{4})\sg{3}\sg{2}\sg{1}\sg{2}=\\&=\sg{3}\sg{4}\sg{3}\sg{2}(\sg{3}\sg{1})\sg{4}(\sg{1}\sg{3})\sg{2}\sg{1}\sg{2}=\sg{3}\sg{4}\sg{3}\sg{2}\sg{1}\sg{3}\sg{4}\sg{3}(\sg{1}\sg{2}\sg{1})\sg{2}=\tu{3}\tu{1},\allowdisplaybreaks\\
\tu{1}\tu{4}&=\sg{2}\sg{3}\sg{4}\sg{3}\sg{2}(\sg{1}\cdot\sg{3})\sg{5}\sg{3}\sg{2}\sg{1}\sg{2}=\sg{2}\sg{3}\sg{4}(\sg{3}\sg{2}\sg{3})\sg{1}\sg{5}\sg{3}\sg{2}\sg{1}\sg{2}=\\&=(\sg{2}\sg{3}\sg{2})\sg{4}\sg{3}\sg{2}\sg{1}\sg{5}\sg{3}\sg{2}\sg{1}\sg{2}=\sg{3}\sg{2}(\sg{3}\sg{4}\sg{3}\sg{2}\sg{1}\sg{5})\sg{3}\sg{2}\sg{1}\sg{2}\overset{\eqref{braid2}}{=}\\&=\sg{3}\sg{5}(\sg{2}\sg{3}\sg{2})\sg{1}\sg{4}\sg{3}(\sg{1}\sg{2}\sg{1})\sg{2}=\sg{3}\sg{5}\sg{3}\sg{2}\sg{1}\sg{3}\sg{4}\sg{3}\sg{2}\sg{1}=\tu{4}\tu{1},\\
\tu{1}\tu{5}&=\tu{1}\cdot\sg{3}\triangleright\tu{3}=\tu{1}\sg{3}\tu{3}\sg{3}=\sg{3}\tu{3}\sg{3}\tu{1}=\tu{5}\tu{1},\allowdisplaybreaks\\
\tu{1}\tu{6}&=\tu{1}\cdot\sg{3}\triangleright\tu{4}=\tu{1}\sg{3}\tu{4}\sg{3}=\sg{3}\tu{4}\sg{3}\tu{1}=\tu{6}\tu{1},
\end{align*}
with those remaining proved similarly.
\end{proof}

\subsection{Abelianization, number of generators, and orbits of hyperplanes}

In this section, we take advantage of our newly found diagrams to determine the minimal number of generators and classify reflections in the non-genuine setting.
Remarkably, the diagrams turn out to fully encapsulate both properties precisely in the Coxeter yoga: on the one hand, one simply counts the nodes; on the other hand, there is a (order $2$) conjugacy class for any connected component after removal of even and $x$-laces---plus one from the extra order relation in type $C$.
Furthermore, they immediately allow to describe the abelianization: detailing the order of each cyclic factor, one gets the following
\begin{table}[!h]
\begin{tabular}{ccc}\toprule
$\hspace{-.5em}W$ & \hspace{3em} & $W/[W,W]$\\\midrule\\[-.5em]
$[W(A_1)]^\alpha$ & & $2\times2\times2$\\[.5em]
$[W(A_{n-1})]^\alpha$, $n\geq3$ & & $2$\\[.5em]\midrule\\[-.5em]
$[G(2,1,n)]_1^\alpha$, $n\geq2$ & & $2\times2\times2\times2$\\[.5em]\bottomrule
\end{tabular}
\caption{Structure of the commutator factor group for the infinite families of non-genuine Steinberg complex crystallographic reflection groups.}
\label{tab:abel}
\end{table}

Let us start by denoting with $m(W)$ the minimal number of reflections needed to generate a given reflection group $W$.
\begin{proposition}\label{prop:min-gen}
For the non-genuine Steinberg infinite families,
\begin{equation}
    m(W^\alpha_n)=m(\mathrm{Lin}(W^\alpha_n))+2.    
\end{equation}
\end{proposition}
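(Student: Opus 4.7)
The upper bound $m(W^\alpha_n) \leq m(\Lin)+2$ is immediate by inspection of the Coxeter-like presentations in \Cref{thm:first}: each diagram carries $m(\Lin)$ nodes from the linear part and exactly two additional nodes from the double affine extension.

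For the lower bound I would exploit the split extension $W^\alpha_n = \Lin \ltimes \Tran$ with $\Tran \simeq Q \oplus \alpha Q$, where $Q$ is the root lattice of the underlying finite type. The projection $\pi\colon W^\alpha_n \twoheadrightarrow \Lin$ sends reflections to reflections, so any reflection generating set of $W^\alpha_n$ descends to a reflection generating set of $\Lin$, yielding $m(W^\alpha_n) \geq m(\Lin)$. A direct root-system computation then sharpens this to $m \geq m(\Lin)+1$: for any reflection lifts $\sigma_i = (s_i, c_i\alpha_i)$ of the simple reflections of $\Lin$, the Coxeter relations (commutation and braid, of label $m_{ij} \in \{2,3,4\}$ in our families) lift exactly to $W^\alpha_n$. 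Indeed, for $u = s_is_j$ of order $m_{ij}$ the polynomial $1 + u + \cdots + u^{m_{ij}-1}$ annihilates the $\complex$-span of $\{\alpha_i, \alpha_j\}$, which contains the translation part $x = s_it_j + t_i$ of $\sigma_i\sigma_j$. Consequently any $m(\Lin)$ such reflections only generate a subgroup isomorphic to the finite $\Lin$, a strict subgroup of the infinite $W^\alpha_n$.

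The final upgrade from $m \geq m(\Lin)+1$ to $m \geq m(\Lin)+2$ reduces to a cyclicity obstruction for $\Tran$, and this is the step I expect to be the main obstacle. With a single additional reflection $\sigma$ beyond a minimal set of lifts, $\pi(\sigma)$ necessarily coincides with some $\pi(\sigma_k)$, making $\tau = \sigma\sigma_k \in \Tran$ the only new translation gained; the resulting subgroup is $\Lin \ltimes \integer[\Lin]\tau$, so exhausting $W^\alpha_n$ would require $Q \oplus \alpha Q$ to be cyclic over $\integer[\Lin]$. For $[W(A_1)]^\alpha$ this is precluded already by the abelianization $W^{\alpha,\mathrm{ab}} \simeq (\integer_2)^3$ of \Cref{tab:abel}, and analogously $(\integer_2)^4$ handles $[G(2,1,2)]^\alpha_1$. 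For the remaining higher-rank $A$ and $C$ families, one argues via the integral representation theory of the root lattice: the Smith normal form of the $\Lin$-orbit of any candidate $\tau = (q, \alpha q')$ consistently exhibits a nontrivial finite index inside $Q \oplus \alpha Q$---for instance, index $3$ already appears for $\tau = (e_1 - e_2, \alpha(e_1 - e_3))$ in the rank-$2$ type-$A$ case---so no single element can cyclically generate the double root lattice, confirming $m(W^\alpha_n) \geq m(\Lin)+2$.
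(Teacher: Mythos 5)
Your upper bound and the intermediate step $m\geq m(\Lin)+1$ are essentially sound and close to the paper's: the paper also counts nodes for the upper bound, and obtains finiteness of the subgroup generated by $m(\Lin)$ reflections not by lifting Coxeter relations but by irreducibility of $\Lin$ (the subgroup fixes a point, hence is finite and isomorphic to $\Lin$). That route is preferable because a minimal reflection generating set need not project onto \emph{simple} reflections, so your verification that the Coxeter relations lift does not cover all cases; this part is fixable.

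The genuine gap is in the final upgrade to $+2$, exactly where you flag difficulty. First, $\pi(\sigma)$ need not coincide with any $\pi(\sigma_k)$; what is true is that, after conjugating the finite subgroup $G_0\simeq\Lin$ to fix the origin, there is a unique $g\in G_0$ with the same linear part as $\sigma$, and $\tau=\sigma g^{-1}$ is the new translation. Second, and crucially, reducing to ``$Q\oplus\alpha Q$ is not cyclic over $\integer[\Lin]$'' is both unproven and stronger than needed: for $n\geq 3$ the $\rational[\mathfrak{S}_n]$-module $V\oplus V$ \emph{is} cyclic (it is a quotient of the matrix component $M_{n-1}(\rational)$), so integral non-cyclicity for arbitrary $\tau=q+\alpha q'$ is a delicate statement that one Smith-normal-form example in rank $2$ does not establish. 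The observation you are missing is that $\sigma$ and $g$ are both reflections with linear part $s_\beta$, so their translation parts lie on the complex line $\complex\beta$ and hence $\tau=(a+b\alpha)\beta$ for integers $a,b$; consequently $\integer[\Lin]\tau$ has $\integer$-rank at most $\mathrm{rank}\,Q$, i.e.\ at most half of $\mathrm{rank}(Q\oplus\alpha Q)$, and no index computation is needed. (Your test vector $(e_1-e_2)+\alpha(e_1-e_3)$ has non-proportional components and cannot arise as $\sigma g^{-1}$.) This is precisely the paper's argument in type $A$: one extra reflection lands you at most in the proper subgroup $W(A_{n-1})\ltimes Q(A_{n-1})\simeq W(A^{(1)}_{n-1})$. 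For type $C$ the paper avoids module theory altogether: since the abelianization is $(\integer_2)^4$ and all transposition-type reflections fall into a single hyperplane orbit, a generating set needs $n-1$ transposition-type reflections for the linear part plus at least three diagonal-type reflections in pairwise distinct orbits, giving $n+2$ directly; your appeal to the abelianization handles only $n\leq 2$.
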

\begin{proof}    
On the one hand, each diagram's node counting gives an upper bound. For $[W(A_1)]^\alpha$, the order $2^3$ of the commutator factor group suffices to prove the assertion by requiring a generating set to contain at least $3$ elements. 
On the other hand, assume $[G(2,1,n)]_1^\alpha$ is generated by $n$ reflections $\sg{1},\ldots,\sg{n}$ whose linear parts match the standard 
representation of $G(2,1,n)$. Being the latter irreducible, $\langle\sg{1},\ldots,\sg{n}\rangle$ fixes the origin and is thus finite.  
In particular, $\langle\sg{1},\ldots,\sg{n}\rangle\simeq G(2,1,n)$ as any element in the kernel, being a pure translation, would contradict finiteness. Analogously, one obtains that $n-1$ reflections in $[W(A_{n-1})]^\alpha$ generate a group isomorphic to $\mathfrak{S}_n$.

Since $[G(2,1,n)]_1^\alpha$ has abelianization of order $2^4$ and linear part with just two classes of reflections, we get a lower bound confirming the assertion. For $[W(A_{n-1})]^\alpha$, abelianization is of no use and we can just deduce the insufficient lower bound of $n$ generating reflections.
Nevertheless, as already observed in the Introduction, $[W(A_{n-1})]^\alpha\simeq W(A_{n-1})\ltimes Q(A_{n-1})^{\times2}$ manifestly contains $W(A^{(1)}_{n-1})\simeq W(A_{n-1})\ltimes Q(A_{n-1})$ as a \emph{proper} subgroup, whose minimal generating set requires exactly $n$ elements.   
\end{proof}

Extending \cite[§6]{Malle1996} to our non-genuine setting, we next classify reflections up to the relation that the corresponding hyperplanes of fixed points lie in the same (order $2$) orbit:
\begin{proposition}\label{prop:conj-class}
    There are four conjugacy classes in $[G(2,1,1)]_1^\alpha\simeq[W(A_1)]^\alpha$. For $n\geq2$, there are five conjugacy classes in $[G(2,1,n)]_1^\alpha$ and a single conjugacy class in $[W(A_{n})]^\alpha$.
\end{proposition}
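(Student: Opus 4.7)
The plan is to bracket the orbit count by combining the classical ``odd-edge conjugacy'' criterion, applied to the Coxeter-like diagrams of \Cref{tab:nongenuine}, with the abelianization data of \Cref{tab:abel}, then to handle the extra reflections arising from the additional order relations on a case-by-case basis.

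For the upper bound, I invoke the standard fact that two involutive generators linked by an odd-labeled braid relation are conjugate in the group they generate. Applied to our diagrams after discarding every even-labeled and $x$-labeled lace, this bounds the number of orbits of generating reflections by the number of connected components. For $[W(A_1)]^\alpha$ the three $\infty$-laces leave the three nodes isolated (three components); for $[G(2,1,n)]_1^\alpha$ with $n\geq2$ the surviving graph splits into $\{\sg{1}\}$, $\{\sg{2},\ldots,\sg{n}\}$, $\{\sg{n+1}\}$, $\{\sg{n+2}\}$ (four components); for $[W(A_{n-1})]^\alpha$ with $n\geq3$ every lace is $3$-labeled, so the whole graph is connected (one component). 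In each case the extra order relation contributes one further reflection---$\upsilon:=\sg{1}\sg{2}\sg{3}$, $\upsilon:=\sg{1}\cdots\sg{n+2}\sg{n}\cdots\sg{2}$, or $\upsilon:=\sg{1}\cdots\sg{n+1}\sg{n-1}\cdots\sg{2}$ respectively---giving upper bounds of $4$, $5$, and (a priori) $2$.

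For the lower bound I project each reflection to the abelianization. In the $A_1$ case, $(\integer/2)^3$ separates the four reflections via the distinct images $\bar{\sg{1}}, \bar{\sg{2}}, \bar{\sg{3}}, \bar{\sg{1}}+\bar{\sg{2}}+\bar{\sg{3}}$. In type $C$, $(\integer/2)^4$ separates five classes: the generators map to four independent elements (with $\sg{2},\ldots,\sg{n}$ identified through the $3$-laces), and $\bar{\upsilon}=\bar{\sg{1}}+\bar{\sg{n+1}}+\bar{\sg{n+2}}$ appears as a fifth, after cancelling the pairs of $\sg{2},\ldots,\sg{n}$ that occur squared inside $\upsilon$. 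In type $A$ the abelianization $\integer/2$ only guarantees at least one class; to close the gap I must explicitly exhibit $\upsilon$ as a conjugate of some $\sg{i}$.

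The main obstacle is exactly this last step, reducing the type-$A$ upper bound from two to one. The natural plan is to manipulate the $x$-relation \eqref{braid2}---which already ``mixes'' $\sg{n+1}$ into the long word defining $\upsilon$---together with the order relation $\upsilon^2=1$ to extract an explicit conjugator. Alternatively, one may use the explicit matrix form of $\upsilon$ displayed in the proof of \Cref{thm:first} to produce a translation $t\in\mathrm{Tran}(W^\alpha)$ whose action sends the hyperplane fixed by $\upsilon$ onto that of $\sg{1}$. Once this single conjugacy is in place, the three counts $4$, $5$, and $1$ claimed by the Proposition follow.
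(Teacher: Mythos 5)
Your abelianization lower bounds are sound, and the odd-edge criterion does show that the \emph{named} reflections (the generators together with the extra reflection $\upsilon$ from the order relation) fall into at most $4$, $5$, resp.\ $2$ equivalence classes among themselves. But this is not the upper bound the Proposition needs: the statement counts classes of \emph{all} reflections of these infinite groups (equivalently, orbits of all reflection hyperplanes), and nothing in your argument shows that an arbitrary reflection is equivalent to one of your finitely many representatives. In $[W(A_1)]^\alpha$, for example, the reflections are $(-1\,|\,b)$ for \emph{every} $b\in\integer+\alpha\integer$, and one must prove that conjugation (by lattice translations, which shift $b$ by $2\Lambda^\alpha$, and by the linear part) reduces $b$ to one of $\{0,1,\alpha,1+\alpha\}$; similarly in type $C$ for the $\diag(-1,1,\ldots,1)$-type reflections, and in both higher-rank families for the permutation-type reflections, where the translation part must be shown removable altogether. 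This exhaustive reduction is precisely the content of the paper's proof: it embeds $W_n^\alpha\hookrightarrow GL_{n+1}(\complex)$, writes an arbitrary affine reflection as a standard linear representative plus a translation vector $b\in\integer+\alpha\integer$, and conjugates by suitable translations to restrict to $b\in\{0,1,\alpha,1+\alpha\}$ (diagonal type, four mutually non-conjugate representatives) or to $b=0$ (transposition type, a single class, which simultaneously settles $[W(A_{n-1})]^\alpha$). Without this step your $4$, $5$, $1$ are not established from above; note also that the ``count components after deleting even and $x$-laces'' rule is presented in the paper as a feature \emph{encoded} by the diagrams as a consequence of this Proposition, not as a tool strong enough to prove it for the full (infinite) set of reflections.

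Second, even within your scheme the type-$A$ case is left unfinished: you only sketch two possible routes for showing $\upsilon=\sg{1}\cdots\sg{n+1}\sg{n-1}\cdots\sg{2}$ is conjugate to a generator, without carrying either out. The matrix route does close it quickly: by the computation in the proof of \Cref{thm:first}, $\upsilon$ has linear part the transposition $(1,2)$ and translation part $(\alpha-1)(e_1-e_2)$, and since $(1-(1,2))$ applied to the lattice $\Lambda^\alpha$ produces $\integer(e_1-e_2)+\alpha\integer(e_1-e_2)$, conjugation by a suitable translation kills the translation part. This is in effect the same computation the paper performs for the transposition-type case, so the missing ingredient in your proposal is exactly the explicit affine-matrix reduction that constitutes the paper's argument.
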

\begin{proof}
We set embeddings $W_n^\alpha \hookrightarrow GL_{n+1}(\complex)$ so to send the linear parts of the generators to the known representatives for the classes of reflections in the finite group $\mathrm{Lin}(W_n^\alpha)$, and denote again permutation matrices 
by $(i,i+1)$.
For $W_n^\alpha=[G(2,1,n)]^\alpha_1,\ n\geq1$, the embedding reads as 
\begin{equation*}
\begin{matrix}
    \sg{1} & \mapsto & \mathrm{diag}(1,-1,1,\ldots,1),\hfill\\
    \sg{i} & \mapsto & (i,i+1),\quad 2\leq i \leq n.
\end{matrix}
\end{equation*}
Thus, an affine reflection $\sigma \in W_n^\alpha$ has $n\times n$ linear part that is conjugate to either $\mathrm{diag}(-1,1,\ldots,1)$ or, say, $(1,2)$. In the first case,
\begin{equation*}
    \sigma=\begin{pmatrix}
        1 & 0 & 0 \\
        b & -1 & 0 \\
        0 & 0 & \mymathbb{1}_{n-1}
    \end{pmatrix}
\end{equation*}
for $b\in\integer+\alpha\integer$. Conjugating by suitable translations, we can restrict to having $b\in\{0,1,\alpha,1+\alpha\}$: respectively, these values cover the combinations $2z_1+i2z_2$, $(1+2z_1)+i2z_2$, $(1+2z_1)+i(1+2z_2)$, and $2z_1+i(1+2z_2)$; $z_1,z_2\in\integer$. 
In particular, the four representatives are not conjugate inside the group, proving the assertion for $n=1$.
In the second case, manifesting when $n>1$,
\begin{equation*}
    \sigma=\begin{pmatrix}
        1 & 0 & 0 & 0 \\
        b & 0 & 1 & 0 \\
        -b & 1 & 0 & 0 \\
        0 & 0 & 0 & \mymathbb{1}_{n-2}
    \end{pmatrix}
\end{equation*}
and the restriction $b=0$ suffices, proving the assertion in any rank.
Collaterally, this last part of the argument proves the $n\geq3$ assertion for $W_n^\alpha=[W(A_{n-1})]^\alpha$.
\end{proof}
\begin{remark}
    When the linear part $G(d,1,n)$ allows for higher reflections, namely $d\in\{3,4,6\}$, powers of a reflection get all identified and there are just three orbits of hyperplanes for $n=1$ or four in higher rank \cite[Table III]{Malle1996}. The additional orbit is a byproduct of the moduli characterizing the non-genuine case, whose linear part is independent of the lattice (generic) parameter $\alpha$. E.g., in $[G(4,1,n)]^\alpha_1$ the above case $b=\alpha$, which translates to $b=i$, sits inside the $b=1$ orbit by conjugation of the element
    \begin{equation*}
    \begin{pmatrix}
        1 & 0 & 0 \\
        z_1+iz_2 & i & 0 \\
        0 & 0 & \mymathbb{1}_{n-1}
    \end{pmatrix},
\end{equation*}
whose linear part's explicit dependence on the lattice parameter $i$ is manifest.
\end{remark}

\section{The braid theorem}\label{sec:braid}

Time has come to prove the braid theorem for our new reflection presentations. En passant, the proofs establish the topological facet of the $x$-relations by specifying those generators in need of an explicit inversion.
In all cases, we first invoke a presentation of the braid group and then prove the isomorphism $\mathrm{Ar}(W_n^\alpha)\simeq\mathrm{Br}(W_n^\alpha)$ directly by checking both ways that the relations of one group suffice to satisfy those of the other group.  

\paragraph{Case $W^\alpha=[G(2,1,1)]^\alpha_1\simeq[W(A_1)]^\alpha$\\}
For the simplest group, $\mathrm{Tran}([G(2,1,1)]^\alpha_1)$ is just given by translations from the elementary lattice $\Lambda^\alpha=\integer+\alpha\integer\subset\complex$, while the linear part reads simply as $\langle-1\rangle$.
It easily follows that the set of reflection hyperplanes is the lattice $\frac{1}{2}\Lambda^\alpha$. Thus, the quotient $M_{[G(2,1,1)]^\alpha_1}/\mathrm{Tran}([G(2,1,1)]^\alpha_1)$ is homeomorphic to $\Sigma_{1,4}$, the four-punctured torus. With a further quotient by the linear part $\{\pm1\}$, no punctures get identified and a simple argument on invariants \cite[Appendix J]{Dubrovin1996} turns the torus into the sphere. In other words,
\begin{equation}\label{braidn=1}
    \mathrm{Br}([G(2,1,1)]^\alpha_1) \ \simeq \ \pi_1(\Sigma_{0,4})
\end{equation}
is nothing but the free group in three generators, manifestly matching the Artin group of the reflection presentation.
\begin{remark}
    Presentation \eqref{A1-pres} appeared more than thirty years ago \cite[Theorem 3.7 (I)]{Tak1994}, when Takebayashi derived presentations for the elliptic Weyl groups $W(X^{(1,1)})$, $X=A,B,C,D$. Beyond the simplest group, presentations \eqref{C-pres} and \eqref{A-pres} (respectively $X=C$ and $X=A$) crucially differ in order to achieve the braid theorem.   
\end{remark}

\paragraph{Case $W_n^\alpha=[G(2,1,n)]^\alpha_1,\ n\geq2$\\}
In higher rank, the same argument of \cite[Theorem 7.1]{Malle1996} extends interpretation \eqref{braidn=1} via the language of configuration spaces:
\begin{equation*}
    \mathrm{Br}([G(2,1,n)]^\alpha_1) \ \simeq \ \pi_1(\mathrm{UC}_n(\Sigma_{0,4})),
\end{equation*}
for the unordered configuration space of $n$ points on the four-punctured sphere
\begin{equation*}
    \mathrm{UC}_n(\Sigma_{0,4}):=\{(z_1,\cdots,z_n)\in\Sigma_{0,4}\ |\ z_i \neq z_j \text{ for }i\neq j\}\,/\,\mathfrak{S}_n.
\end{equation*}
Crucially, the fundamental group of such space admits the presentation
\begin{equation}\label{p1sigma04}
    \pi_1(\mathrm{UC}_n(\Sigma_{0,4})) \ = \ \left\langle\ \begin{matrix}
        u_1,u_2,u_3,u_4,\\t_1,\cdots,t_{n-1}\hfill
    \end{matrix}\ \left|\ \begin{array}{c}
        u_1u_2u_3u_4t_1 \cdots t_{n-1}t_{n-1}\cdots t_1=1,\hfill\\
        t_it_j=t_jt_i \text{\hspace{1.5em}for\hspace{.5em}} i-j>1,\hfill\\
        t_it_{i+1}t_i=t_{i+1}t_it_{i+1}\text{\hspace{1.5em}for\hspace{.5em}} i\leq n-2,\hfill\\
        u_it_j=t_ju_i  \text{\hspace{1.5em}for\hspace{.5em}}  j\geq2,\hfill\\
        u_it_1u_it_1=t_1u_it_1u_i,\hfill\\
        u_it^{-1}_1u_jt_1=t^{-1}_1u_jt_1u_i  \text{\hspace{1.5em}for\hspace{.5em}}  i<j\hfill      
    \end{array}\right.\right\rangle,
\end{equation}
obtained by adding the first relation\footnote{In order to later match the definition \cite{EGO2006} of higher rank GDAHAs, we also replaced each relation in the last group with an inverse of its conjugation.} (closedness) to the presentation for the fundamental group of the four-punctured plane's unordered configuration space found in \cite[§5]{Lambropoulou2000}---there denoted in the jargon of strands as $B_{4,n}$.
\begin{lemma}\label{lem:braidfourzero}
    Let $\mathbf{s}:=s_2\cdots s_n$. Then, the following assignments
    \begin{equation}\begin{matrix}
        u_1 & \mapsto & (s_1\cdots s_{n+2}s_n\cdots s_2)^{-1},\hfill\\
        u_2 & \mapsto & s_1,\hfill\\
        u_3 & \mapsto & \mathbf{s}\,s_{n+1}\mathbf{s}^{-1},\hfill\\
        u_4 & \mapsto & \mathbf{s}\,s_{n+2}\mathbf{s}^{-1},\hfill\\
        t_i & \mapsto & s_{i+1},\hfill
    \end{matrix}
    \end{equation}
extend to an isomorphism
\begin{equation}
\pi_1(\mathrm{UC}_n(\Sigma_{0,4})) \ \simeq \ \mathrm{Ar}([G(2,1,n)]^\alpha_1).
\end{equation}
\end{lemma}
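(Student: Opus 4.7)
The plan is to realize the stated isomorphism by constructing mutually inverse homomorphisms. First, verify that the given assignments define a well-defined homomorphism $\varphi: \pi_1(\mathrm{UC}_n(\Sigma_{0,4})) \to \mathrm{Ar}([G(2,1,n)]^\alpha_1)$ by checking each defining relation in \eqref{p1sigma04} holds in the target. Then define an inverse $\psi$ on the Artin generators by literal inversion of the formulas: $s_1 \mapsto u_2$, $s_{i+1} \mapsto t_i$ for $1 \leq i \leq n-1$, $s_{n+1} \mapsto \mathbf{t}^{-1} u_3 \mathbf{t}$, $s_{n+2} \mapsto \mathbf{t}^{-1} u_4 \mathbf{t}$ with $\mathbf{t} := t_1 \cdots t_{n-1}$, and verify it against the Artin relations of \eqref{C-pres} stripped of their order conditions. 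Since $\varphi$ and $\psi$ invert each other on generators by construction, well-definedness in both directions will conclude the proof.

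For $\varphi$, the relations split into routine and decisive families. The braid and commutation relations among the $t_i$'s are inherited verbatim from the corresponding Artin relations among $s_2, \ldots, s_n$. The closedness relation is essentially tautological: after substitution and cancellation of the trailing $\mathbf{s}^{-1} \mathbf{s}$, it collapses to the very definition of $u_1$. The commutations $u_i t_j = t_j u_i$ for $j \geq 2$ follow from the diagrammatic commutations of $s_1$ with $s_3, \ldots, s_n$ (for $u_2$), from pushing the commuting generators past the conjugating word $\mathbf{s}$ via the diagram (for $u_3, u_4$), and from the closedness identity already established (for $u_1$). The quartic relations $u_i t_1 u_i t_1 = t_1 u_i t_1 u_i$ reduce analogously: the $i = 2$ case is exactly the Artin quartic $(s_1 s_2)^2 = (s_2 s_1)^2$; the $i = 3, 4$ cases, after slide-and-cancel through $\mathbf{s}$, collapse to the quartics $(s_n s_{n+1})^2 = (s_{n+1} s_n)^2$ and $(s_n s_{n+2})^2 = (s_{n+2} s_n)^2$ of \eqref{C-pres}; the $i = 1$ case then falls out via closedness combined with the already-verified quartics for $u_2, u_3, u_4$.

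The main obstacle will be the crossing relation $u_3 t_1^{-1} u_4 t_1 = t_1^{-1} u_4 t_1 u_3$, which is the sole place where the explicit inverse letters in the $x$-relation are forced. After conjugation slide through $\mathbf{s}$ and cancellation of the surrounding $\mathbf{s}\mathbf{s}^{-1}$ pairs, the expected outcome is that it collapses to precisely the $x$-relation $s_n s_{n+1} s_n^{-1} s_{n+2} = s_{n+2} s_n s_{n+1} s_n^{-1}$, with the placement of the inverse on $s_n$ dictated by where the conjugating $t_1^{-1}$ lands after the braid moves. This correspondence is exactly what pins down the explicit topological form of the $x$-relation displayed in \Cref{tab:nongenuine}, confirming the foreshadowing at the opening of \Cref{sec:braid}. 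The remaining crossings, involving $u_1$ or $u_2$, follow from closedness and the diagrammatic commutations of $s_1$ with $s_{n+1}, s_{n+2}$.

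The backward direction is then a mirror image: each Artin relation in \eqref{C-pres}, minus the order conditions, corresponds to one of the $\pi_1$ relations already aligned above. The commutations and braid relations among $s_2, \ldots, s_n$ and between $s_1$ and the rest map to their $\pi_1$ counterparts; the quartics map to the quartic $\pi_1$ relations after the symmetric slide; and the $x$-relation maps to the crossing relation via the same computation read in reverse. Once both $\varphi$ and $\psi$ are known to be well-defined homomorphisms, their compositions are the identity on generators by construction, completing the proof.
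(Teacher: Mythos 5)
Your overall strategy coincides with the paper's: define the map and its candidate inverse on generators, verify each presentation's relations in the other group, and conclude by noting the two homomorphisms invert each other (for $u_1$ one also needs closedness, not just "by construction", but that is minor). The treatment of the $t$-relations, closedness, the $u_2,u_3,u_4$ quartics, and the $(u_3,u_4)$ crossing relation --- which indeed is where the $x$-relation $\sg{}$-form $s_ns_{n+1}s_n^{-1}s_{n+2}=s_{n+2}s_ns_{n+1}s_n^{-1}$ is forced --- matches the paper's computations in substance.

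The genuine gap is in how you dispose of the relations involving $u_1$. You claim the $i=1$ quartic ``falls out via closedness combined with the already-verified quartics for $u_2,u_3,u_4$'' and that the remaining crossings involving $u_1$ ``follow from closedness and the diagrammatic commutations of $s_1$ with $s_{n+1},s_{n+2}$.'' Neither claim is true as stated: in the target Artin group the identities $\varphi(u_1t_1u_1t_1)=\varphi(t_1u_1t_1u_1)$ and $\varphi(u_1t_1^{-1}u_jt_1)=\varphi(t_1^{-1}u_jt_1u_1)$ for $j=3,4$ require the $x$-relation itself (together with the quartic relations), and even the $j=2$ crossing uses the quartic for $(s_1,s_2)$; these are precisely the longest manipulations in the paper's proof, not consequences of closedness plus commutations. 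The only way to avoid these verifications would be to prove that in presentation \eqref{p1sigma04} every relation involving $u_1$ (other than closedness) is a consequence of the remaining relations --- a Tietze-elimination statement you neither formulate nor justify, and which is not what your stated reasons amount to. As written, executing your plan would stall exactly at these checks, so either carry out the explicit computations (as the paper does) or supply the redundancy argument for the $u_1$-relations.
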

\begin{proof}
We start with a preliminary explanation for the notation: a pair of round brackets triggers the corresponding braid relation or $x$-relation, while a pair of square brackets delimits the insertion of an identity via inverse elements.

Setting $\mathbf{s}^T:=s_n\cdots s_2$, the first relation follows as the image of $u_1$ is nothing but the element
$$(u_2u_3u_4\mathbf{s}\mathbf{s}^T)^{-1}.$$
The second and third groups of relations, corresponding to the Coxeter diagram $A_{n-1}$, are obvious too. For the remaining ones, we give again the basic manipulations for the $n=3$ case, as they easily adapt with increasing rank:
\begin{align*}
    u_1t_2&=s^{-1}_2s^{-1}_3s^{-1}_5s^{-1}_4(s^{-1}_3s^{-1}_2s_3)s^{-1}_1=(s^{-1}_2s^{-1}_3s_2)s^{-1}_5s^{-1}_4s^{-1}_3s^{-1}_2s^{-1}_1=t_2u_1,\allowdisplaybreaks\\[.5em]
    u_2t_2&=s_3s_1=t_2u_2,\allowdisplaybreaks\\[.5em]
    u_3t_2&=s_2s_3s_4(s^{-1}_3s^{-1}_2s_3)=(s_2s_3s_2)s_4s^{-1}_3s^{-1}_2=t_2u_3,\allowdisplaybreaks\\[.5em]
    u_4t_2&=s_2s_3s_5(s^{-1}_3s^{-1}_2s_3)=(s_2s_3s_2)s_5s^{-1}_3s^{-1}_2=t_2u_4,\allowdisplaybreaks\\[.5em]
 u_1t_1u_1t_1&=s^{-1}_2s^{-1}_3s^{-1}_5s^{-1}_4(s^{-1}_3s^{-1}_2 s^{-1}_3)s^{-1}_5s^{-1}_4s^{-1}_3(s^{-1}_1s^{-1}_2s^{-1}_1s_2)=\\&=(s^{-1}_2s^{-1}_3s^{-1}_2)s^{-1}_5s^{-1}_4s^{-1}_3s^{-1}_5s^{-1}_4(s^{-1}_2s^{-1}_3s_2)s^{-1}_1s^{-1}_2s^{-1}_1=\\&=s^{-1}_3s^{-1}_2s^{-1}_3s^{-1}_5(s^{-1}_4s^{-1}_3s^{-1}_5[s_3)s^{-1}_3]s^{-1}_4s_3s^{-1}_2s^{-1}_1s^{-1}_3s^{-1}_2s^{-1}_1=\\&=s^{-1}_3s^{-1}_2(s^{-1}_3s^{-1}_5s^{-1}_3s^{-1}_5)s_3s^{-1}_4s^{-1}_3s^{-1}_4s_3s^{-1}_2s^{-1}_1s^{-1}_3s^{-1}_2s^{-1}_1=\\&=s^{-1}_3s^{-1}_2s^{-1}_5s^{-1}_3s^{-1}_5s_3(s^{-1}_3s^{-1}_4s^{-1}_3s^{-1}_4)s_3s^{-1}_2s^{-1}_1s^{-1}_3s^{-1}_2s^{-1}_1=\\&=s^{-1}_3s^{-1}_2s^{-1}_5(s^{-1}_3s^{-1}_5s_3s^{-1}_4)s^{-1}_3s^{-1}_4s^{-1}_2s^{-1}_1s^{-1}_3s^{-1}_2s^{-1}_1=\\&=s^{-1}_3s^{-1}_5s^{-1}_4(s^{-1}_2s^{-1}_3s^{-1}_2)s^{-1}_1s^{-1}_5s^{-1}_4s^{-1}_3s^{-1}_2s^{-1}_1=t_1u_1t_1u_1,\allowdisplaybreaks\\[.5em]
 u_2t_1u_2t_1&=s_1s_2s_1s_2=s_2s_1s_2s_1=t_1u_2t_1u_2,\allowdisplaybreaks\\[.5em]
u_3t_1u_3t_1&=s_2s_3s_4s^{-1}_3s_2s_3s_4s^{-1}_3=s_2s_3s_4(s_2s_3s^{-1}_2)s_4s^{-1}_3=s_2s_3s_2(s_3s_4s_3s_4)s^{-1}_3s^{-1}_2s^{-1}_3=\\&=s_2(s_2s_3s_2)s_4s_3s_4(s^{-1}_2s^{-1}_3s^{-1}_2)=s_2s_2s_3s_4(s^{-1}_3s_2s_3)s_4s^{-1}_3s^{-1}_2=t_1u_2t_1u_2,\allowdisplaybreaks\\[.5em]
u_4t_1u_4t_1&=s_2s_3s_5s^{-1}_3s_2s_3s_5s^{-1}_3=s_2s_3s_5(s_2s_3s^{-1}_2)s_5s^{-1}_3=s_2s_3s_2(s_3s_5s_3s_5)s^{-1}_3s^{-1}_2s^{-1}_3=\\&=s_2(s_2s_3s_2)s_5s_3s_5(s^{-1}_2s^{-1}_3s^{-1}_2)=s_2s_2s_3s_5(s^{-1}_3s_2s_3)s_5s^{-1}_3s^{-1}_2=t_1u_4t_1u_4,\allowdisplaybreaks\\[.5em]
u_1t_1^{-1}u_2t_1&=s^{-1}_2s^{-1}_3s^{-1}_5s^{-1}_4s^{-1}_3s^{-1}_2(s^{-1}_1s^{-1}_2s_1s_2)=s^{-1}_2s_1s^{-1}_3s^{-1}_5s^{-1}_4s^{-1}_3s^{-1}_2s^{-1}_1=t_1^{-1}u_2t_1u_1,\allowdisplaybreaks\\[.5em]
u_1t_1^{-1}u_3t_1&=s^{-1}_2s^{-1}_3s^{-1}_5s^{-1}_4(s^{-1}_3s^{-1}_2s_3)s_4s^{-1}_3s^{-1}_1=(s^{-1}_2s^{-1}_3s_2)s^{-1}_5(s^{-1}_4s^{-1}_3s_4[s_3)s^{-1}_3]s^{-1}_2s^{-1}_3s^{-1}_1=\\&=s_3s^{-1}_2(s^{-1}_3s^{-1}_5s_3s_4)s^{-1}_3s^{-1}_4(s^{-1}_3s^{-1}_2s^{-1}_3)s^{-1}_1=s_3s_4(s^{-1}_2s^{-1}_3s^{-1}_2)s^{-1}_5s^{-1}_4s^{-1}_3s^{-1}_2s^{-1}_1=t_1^{-1}u_3t_1u_1,\allowdisplaybreaks\\[.5em]
u_1t_1^{-1}u_4t_1&=s^{-1}_2s^{-1}_3s^{-1}_5s^{-1}_4(s^{-1}_3s^{-1}_2s_3)s_5s^{-1}_3s^{-1}_1=(s^{-1}_2s^{-1}_3s_2)s^{-1}_5(s^{-1}_4s^{-1}_3s_5[s_3)s^{-1}_3]s^{-1}_2s^{-1}_3s^{-1}_1=\\&=s_3s^{-1}_2(s^{-1}_3s^{-1}_5s^{-1}_3s_5)s_3s^{-1}_4(s^{-1}_3s^{-1}_2s^{-1}_3)s^{-1}_1=s_3s_5(s^{-1}_2s^{-1}_3s^{-1}_2)s^{-1}_5s^{-1}_4s^{-1}_3s^{-1}_2s^{-1}_1=t_1^{-1}u_4t_1u_1,\allowdisplaybreaks\\[.5em]
u_2t_1^{-1}u_3t_1&=s_3s_4s^{-1}_3s_1=t_1^{-1}u_3t_1u_2,\allowdisplaybreaks\\[.5em]
u_2t_1^{-1}u_4t_1&=s_3s_5s^{-1}_3s_1=t_1^{-1}u_4t_1u_2,\allowdisplaybreaks\\[.5em]
u_3t_1^{-1}u_4t_1&=s_2s_3s_4(s^{-1}_3s^{-1}_2s_3)s_5s^{-1}_3=(s_2s_3s_2)s_4s^{-1}_3s_5s^{-1}_2s^{-1}_3=s_3s_2(s_3s_4s^{-1}_3s_5)s^{-1}_2s^{-1}_3=\\&=s_3s_5s_2s_3s_4(s^{-1}_3s^{-1}_2s^{-1}_3)=s_3s_5(s_2s_3s^{-1}_2)s_4s^{-1}_3s^{-1}_2=t_1^{-1}u_4t_1u_3.
\end{align*}
We conclude by repeating these $n=3$ checks for the inverse assignments
\begin{equation}\begin{matrix}
        s_1 & \mapsto & u_2,\hfill\\
        s_i & \mapsto & t_{i-1}, \quad 2\leq i \leq n,\\
        s_{n+1} & \mapsto & \mathbf{t}^{-1}u_3\mathbf{t},\hfill\\
        s_{n+2} & \mapsto & \mathbf{t}^{-1}u_4\mathbf{t},\hfill
    \end{matrix}
    \end{equation}
where now $\mathbf{t}:=t_1\cdots t_{n-1}$:
\begin{align*}
    s_1s_2s_1s_2&=u_2t_1u_2t_1=t_1u_2t_1u_2=s_2s_1s_2s_1,\allowdisplaybreaks\\[.5em]
    s_1s_3&=t_2u_2=s_3s_1,\allowdisplaybreaks\\[.5em]    
    s_1s_4&=t_2^{-1}(u_2t_1^{-1}u_3t_1)t_2=s_3s_1,\allowdisplaybreaks\\[.5em]
    s_1s_5&=t_2^{-1}(u_2t_1^{-1}u_4t_1)t_2=s_3s_1,\allowdisplaybreaks\\[.5em]
    s_2s_3s_2&=t_1t_2t_1=t_2t_1t_2=s_3s_2s_3,\allowdisplaybreaks\\[.5em]
    s_2s_4&=(t_1\cdot t_2^{-1}t_1^{-1})u_3t_1t_2=t_2^{-1}t_1^{-1}u_3(t_2t_1t_2)=s_4s_2,\allowdisplaybreaks\\[.5em]
    s_2s_5&=(t_1\cdot t_2^{-1}t_1^{-1})u_4t_1t_2=t_2^{-1}t_1^{-1}u_4(t_2t_1t_2)=s_5s_2,\allowdisplaybreaks\\[.5em]
    s_3s_4s_3s_4&=t_1^{-1}u_3(t_2^{-1}t_1t_2)u_3t_1t_2=t_1^{-1}t_2^{-1}t_1^{-1}(u_3t_1u_3t_1)t_2t_1t_2=\\&=(t_2^{-1}t_1^{-1}t_2^{-1})u_3t_1u_3(t_2t_1t_2)t_2=t_2^{-1}t_1^{-1}u_3(t_1t_2t_1^{-1})u_3t_1t_2t_2=s_4s_3s_4s_3,\allowdisplaybreaks\\[.5em]    s_3s_5s_3s_5&=t_1^{-1}u_4(t_2^{-1}t_1t_2)u_4t_1t_2=t_1^{-1}t_2^{-1}t_1^{-1}(u_4t_1u_4t_1)t_2t_1t_2=\\&=(t_2^{-1}t_1^{-1}t_2^{-1})u_4t_1u_4(t_2t_1t_2)t_2=t_2^{-1}t_1^{-1}u_4(t_1t_2t_1^{-1})u_4t_1t_2t_2=s_5s_3s_5s_3,\allowdisplaybreaks\\[.5em]    s_3s_4s_3^{-1}s_5&=t_1^{-1}u_3(t_1t_2^{-1}t_1^{-1})u_3t_1t_2=t_1^{-1}t_2^{-1}u_3t_1^{-1}u_3(t_2t_1t_2)=t_1^{-1}t_2^{-1}(u_3t_1^{-1}u_3t_1)t_2t_1=\\&=(t_1^{-1}t_2^{-1}t_1^{-1})u_3t_1t_2u_3t_1=t_2^{-1}t_1^{-1}u_3(t_2^{-1}t_1t_2)u_3t_1=s_5s_3s_4s_3^{-1}.\qedhere
\end{align*}
\end{proof}
\begin{figure}[t]
    \centering
    \includegraphics[height=12em]{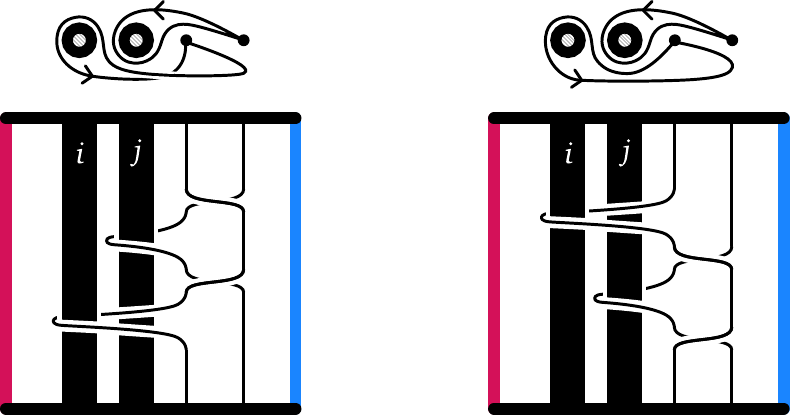}
    \caption{The $n=2$ visualization of the ``elliptic braid relation'' $[u_i,t_1^{-1}u_jt_1]=0,\ i<j$, under counterclockwise convention. E.g., the left pair of strands is read downwards exactly as $u_it_1^{-1}u_jt_1$. The equivalence of the two sides becomes manifest in the picture, even more so when seen from the top. In configuration terms, $t_i$ swaps strand $i$ over strand $i+1$ while $u_k$ loops the first strand counterclockwise around hole $k$.}
    \label{fig:ell-braid}
\end{figure}
\begin{remark}
The affinity of presentation \eqref{p1sigma04} with the language of Artin groups was already observed in \cite[Remark 4]{Lambropoulou2000}; there, for the general $B_{m,n}$ case, $(m-1)m/2$ extra relations are needed to complement the given Coxeter-type diagram. Despite including the further closedness relation, the above presentation for $\mathrm{Ar}([G(2,1,n)]^\alpha_1)$ fails a purely Artin description by the \emph{single} elliptic braid relation involving the pair of additional generators, in that the remaining five elliptic braid relations get encoded in purely Coxeter terms.     
\end{remark}

\paragraph{Case $W_n^\alpha=[W(A_{n-1})]^\alpha,\ n\geq3$\\}
In type $A$, we rely on Ion-Roller's investigation of ``special'' toric configurations, i.e., whose product of points gives the identity element of the torus $\mathbb{T}=\Sigma_{1,0}$. In particular, under minor\footnote{Essentially, replacing the Heisenberg manifold $\mathbb{H}$ with the torus $\mathbb{T}$.} adaptations of \cite[Theorem 6.6]{IR2025}, the following holds: 
\begin{proposition}[\cite{IR2025}]
    The regular orbit space for the elliptic Weyl group $W(A_{n-1}^{(1,1)})$ is homeomorphic to the space of special unordered configurations of $n$ points on the torus
    \begin{equation*}
    \mathrm{SUC}_n(\Sigma_{1,0})=\{(z_1,\cdots,z_n)\in\Sigma_{1,0}\ |\ z_1\cdots z_n=1 \text{, } z_i \neq z_j \ \mathrm{for} \ i\neq j\}\,/\,\mathfrak{S}_n.
\end{equation*}
\end{proposition}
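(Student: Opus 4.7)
The plan is to adapt Ion-Roller's argument from the Heisenberg setting to the purely toric one, exploiting the fact that we are working with $W(A_{n-1}^{(1,1)})$ rather than the full double affine Hecke-type group. Recall the structural identification $W(A_{n-1}^{(1,1)})\simeq W(A_{n-1})\ltimes Q(A_{n-1})^{\times2}=\mathfrak{S}_n\ltimes(Q\oplus\alpha Q)$ acting on the complexified Cartan subalgebra $V=\{x\in\complex^n:\sum_i x_i=0\}$, with root lattice $Q=\{x\in\integer^n:\sum_i x_i=0\}$. The reflection hyperplanes are the diagonals $\{x_i=x_j\}$, so the regular locus is $M=V\setminus\bigcup_{i<j}\{x_i=x_j\}$, and the regular orbit space is $M/(\mathfrak{S}_n\ltimes(Q\oplus\alpha Q))$.

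First, I would dispatch the translation quotient. View $V\hookrightarrow\complex^n$ and compose with the componentwise projection $\complex^n\twoheadrightarrow\mathbb{T}^n$ for $\mathbb{T}=\complex/(\integer+\alpha\integer)=\Sigma_{1,0}$. The rank-$2(n-1)$ lattice $Q\oplus\alpha Q$ is precisely the preimage of the zero section in the composition, so the induced map identifies
\begin{equation*}
V/(Q\oplus\alpha Q)\;\xrightarrow{\ \sim\ }\;\ker\bigl(\mathbb{T}^n\xrightarrow{\mathrm{sum}}\mathbb{T}\bigr)=\{(z_1,\ldots,z_n)\in\Sigma_{1,0}:z_1\cdots z_n=1\},
\end{equation*}
writing the torus group law multiplicatively as in the statement. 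This step reduces matters to a finite-dimensional linear algebra check: the map is clearly $\mathfrak{S}_n$-equivariant (permuting coordinates on both sides), continuous, and descends to a bijection with continuous inverse by the standard open-map argument for covering projections.

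Next, I would verify that the reflection hyperplanes in $V$ transport to the (big) diagonals on the toric side. Under the embedding $V\hookrightarrow\complex^n$, a hyperplane $\{x_i=x_j\}$ maps to $\{z_i=z_j\}$ in $\mathbb{T}^n$; conversely, any regular lift of a point in the special torus avoiding $\{z_i=z_j\}$ lies in the complement of $\{x_i=x_j\}$ modulo $Q\oplus\alpha Q$. Hence the regular locus $M/(Q\oplus\alpha Q)$ is identified with the ordered special configuration space, and a final quotient by $\mathfrak{S}_n=W(A_{n-1})$ produces $\mathrm{SUC}_n(\Sigma_{1,0})$ exactly as defined.

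The main obstacle I anticipate is not the set-theoretic bijection but the careful bookkeeping of the \emph{special} constraint $z_1\cdots z_n=1$: one must show that this condition is not an artifact of a choice of lift but is intrinsic to the quotient $V/(Q\oplus\alpha Q)$, namely that it captures exactly the ``sum-zero'' hyperplane defining $V$ inside $\complex^n$ after passing to the torus. Once this is in place, the rest is bookkeeping: $\mathfrak{S}_n$-equivariance is immediate, and the open quotient topology agrees on both sides because $\mathfrak{S}_n$ acts properly discontinuously on the complement of the diagonals. The ``minor adaptation'' from \cite{IR2025} is precisely this: in the elliptic (rather than double-affine) setting, no central Heisenberg extension is needed, so the thickening $\mathbb{H}\to\mathbb{T}$ drops out and the argument simplifies to the configuration-theoretic description above.
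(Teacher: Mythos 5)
Your overall architecture — first quotient the mirror complement by the translation lattice to land in the special subtorus $\ker\bigl(\mathbb{T}^n\xrightarrow{\mathrm{sum}}\mathbb{T}\bigr)$, then quotient by $\mathfrak{S}_n$ — is sound, and the identification $V/(Q\oplus\alpha Q)\simeq\{z_1\cdots z_n=1\}$ is correct (including the point you worried about: $V\cap(\integer+\alpha\integer)^n=Q\oplus\alpha Q$ because $1,\alpha$ are independent over $\real$). The genuine gap is in your description of the regular locus. You take the reflection hyperplanes of $W(A_{n-1}^{(1,1)})\simeq\mathfrak{S}_n\ltimes(Q\oplus\alpha Q)$ to be the finitely many linear diagonals $\{x_i=x_j\}$, but this group is infinite: its reflections are the elements $(s_{ij}\,|\,c(e_i-e_j))$ with $c\in\integer+\alpha\integer$, so the mirror arrangement is the infinite, locally finite family $\{x_i-x_j=c\}$, $c\in\integer+\alpha\integer$. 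With your $M=V\setminus\bigcup_{i<j}\{x_i=x_j\}$ the second step fails: a point with $x_i-x_j$ a nonzero lattice element lies in your $M$ yet maps into the toric diagonal $\{z_i=z_j\}$, so $M/(Q\oplus\alpha Q)$ is strictly larger than the ordered special configuration space and the claimed homeomorphism breaks. The needed (and easy) repair is precisely the content of the statement: show that the union of \emph{all} mirrors is the full preimage of the big diagonal under $V\to\ker(\mathbb{T}^n\to\mathbb{T})$, which uses $(Q\oplus\alpha Q)\cap\complex(e_i-e_j)=(\integer+\alpha\integer)(e_i-e_j)$, and only then pass to the $\mathfrak{S}_n$-quotient. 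You should also justify computing the \emph{regular} orbit space as the mirror-complement orbit space at all: that is the Steinberg property of $[W(A_{n-1})]^\alpha$, quoted in the paper from \cite{PS2019}; without it $N_W$ and $M_W$ need not agree.

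For comparison, the paper does not reprove this proposition: it imports it from \cite{IR2025} (Theorem 6.6 there), with the only adaptation being the replacement of the Heisenberg manifold by the torus. So a self-contained quotient argument of the kind you sketch is a legitimately different and more elementary route, and it would be acceptable once the mirror arrangement and the Steinberg point are fixed as above.
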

In particular, since $W(A_{n-1}^{(1,1)})\simeq W(A_{n-1})\ltimes Q(A_{n-1})^{\times2}\simeq [W(A_{n-1})]^\alpha$, we interpret again in terms of configuration spaces as
\begin{equation*}
    \mathrm{Br}([W(A_{n-1})]^\alpha) \ \simeq \ \pi_1(\mathrm{SUC}_n(\Sigma_{1,0})),
\end{equation*}
unlocking the following presentation \cite[Theorem 6.8]{IR2025}:
\begin{equation}\label{pi1SUC}
    \pi_1(\mathrm{SUC}_n(\mathbb{T})) \ = \ \left\langle\ \begin{matrix}
        r_0,\cdots,r_{n-1},\\t_1,\cdots,t_{n-1}\hfill
    \end{matrix}\ \left|\ \begin{array}{c}
        r_ir_j=r_jr_i  \text{\hspace{1.5em}for\hspace{.5em}}  i-j\neq1 \mathrm{\ mod\ }n,\hfill\\
        r_ir_jr_i=r_jr_ir_j  \text{\hspace{1.5em}for\hspace{.5em}}  i-j=1 \mathrm{\ mod\ }n,\hfill\\
        t_it_j=t_jt_i,\hfill\\
        r_it_j=t_jr_i  \text{\hspace{1.5em}for\hspace{.5em}}  i-j\neq1 \mathrm{\ mod\ }n,\hfill\\
        r_it_{i+1}r_i=r_{i+1}t_ir_{i+1}=t_it_{i+1}  \text{\hspace{1.5em}for\hspace{.5em}} 1\leq i\leq n-2,\\
        r_0t_ir_0=t_i(t_1\cdots t_{n-1})^{-1}  \text{\hspace{1.5em}for\hspace{.5em}}  i=1,n-1\hfill    
    \end{array}\right.\right\rangle.
\end{equation}
\begin{lemma}\label{lem:braidonezero}
    The following assignments
    \begin{equation}\label{braidonezero}\begin{matrix}
        r_0 & \mapsto & s_{n},\hfill\\
        r_i & \mapsto & s_i,\quad 1\leq i \leq n-1,\hfill\\
        t_1 & \mapsto & (s_{n-1} \cdots s_2)^{-1}s_{n+1}s_{n-1}\cdots s_1,\hfill\\
        t_i & \mapsto & (s_1 \cdots s_{i-1}s_{n-1} \cdots s_{i+1})^{-1}s_{n+1}s_1 \cdots s_{i-1}s_{n-1} \cdots s_i, \quad 2\leq i \leq n-2,\hfill\\
        t_{n-1} & \mapsto & (s_1 \cdots s_{n-2})^{-1}s_{n+1}s_1 \cdots s_{n-1},\hfill
    \end{matrix}
    \end{equation}
extend to an isomorphism
\begin{equation}
\pi_1(\mathrm{SUC}_n(\Sigma_{1,0})) \ \simeq \ \mathrm{Ar}([W(A_{n-1})]^\alpha).
\end{equation}
\end{lemma}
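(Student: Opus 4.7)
Following the strategy of \Cref{lem:braidfourzero}, the plan is to check in two directions: first, that the assignments \eqref{braidonezero} respect the defining relations of $\pi_1(\mathrm{SUC}_n(\mathbb{T}))$ listed in \eqref{pi1SUC}, giving a homomorphism $\pi_1(\mathrm{SUC}_n(\mathbb{T})) \to \mathrm{Ar}([W(A_{n-1})]^\alpha)$; and second, that a candidate inverse, sending $s_i \mapsto r_i$ for $1 \leq i \leq n-1$, $s_n \mapsto r_0$, and $s_{n+1}$ to the common conjugate of any $t_j$ extracted from \eqref{braidonezero} (e.g.\ $s_{n+1} \mapsto (r_1 \cdots r_{n-2}) t_{n-1} (r_1 \cdots r_{n-1})^{-1}$), respects the defining relations of $\mathrm{Ar}([W(A_{n-1})]^\alpha)$, namely those of \eqref{A-pres} with the two order relations removed. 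Mutual invertibility at the level of generators will then be built in by construction, delivering the isomorphism.

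The forward direction splits naturally. The affine $A^{(1)}_{n-1}$ Coxeter relations among the $r_i$'s coincide under \eqref{braidonezero} with the type $A^{(1)}_{n-1}$ subdiagram of \Cref{tab:nongenuine} and require no work. Each image of $t_i$ is a conjugate of $s_{n+1}$ by a short word in $s_1, \ldots, s_{n-1}$, so the pairwise commutations $t_i t_j = t_j t_i$ and the mixed commutations $r_i t_j = t_j r_i$ reduce to manipulations internal to the type $A$ Artin relations, modeled on the $n = 4$ sample computations at the end of the proof of \Cref{thm:first}. The hexagonal identities $r_i t_{i+1} r_i = r_{i+1} t_i r_{i+1} = t_i t_{i+1}$ similarly unpack as conjugation statements verifiable by repeated braid moves on the words flanking $s_{n+1}$, without invoking the $x$-relation---their content is essentially a braid identity in the cross-subdiagram generated by $s_{i+1}$ and $s_{n+1}$, which is manifest from the Coxeter-like diagram.

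The main obstacle is the boundary relation $r_0 t_i r_0 = t_i(t_1 \cdots t_{n-1})^{-1}$ for $i = 1, n-1$: this is the only relation in \eqref{pi1SUC} producing inverse generators, and hence the only one capable of carrying the content of the topological $x$-relation in \eqref{A-pres}. Under \eqref{braidonezero} its left-hand side becomes $s_n$ sandwiching a conjugate of $s_{n+1}$, and the identity must be reduced to the $x$-relation by first using type $A$ braid and commutation relations to align the subwords adjacent to $s_n$ and $s_{n+1}$, then applying the $x$-relation exactly once---this step is precisely what rigidifies the inverse-carrying form of the $x$-relation displayed in \Cref{tab:nongenuine}. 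The reverse direction then runs the argument backwards: the topological $x$-relation unfolds, under the inverse assignment, into the boundary relation, while the remaining Coxeter relations of \eqref{A-pres} reduce to the hexagonal and plain Coxeter identities already established. Performing this matching explicitly in the base cases $n = 3, 4$ should make the pattern algorithmic, with the general case following by the same rearrangement, completing the proof.
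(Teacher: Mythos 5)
Your overall strategy is the same as the paper's: push the assignments through the defining relations \eqref{pi1SUC}, then run an inverse assignment (with $s_{n+1}$ sent to a conjugate of some $t_j$ by a word in $r_1,\dots,r_{n-1}$) through the relations of \eqref{A-pres} with the order relations deleted, all by explicit braid manipulations in low rank that scale with $n$. The gap is in your accounting of where the $x$-relation is needed. You assert that the boundary relations $r_0t_ir_0=t_i(t_1\cdots t_{n-1})^{-1}$, $i=1,n-1$, are the \emph{only} relations capable of carrying the content of the $x$-relation, and that all mixed commutations $r_it_j=t_jr_i$ ``reduce to manipulations internal to the type $A$ Artin relations.'' This fails for $i=0$ and $2\leq j\leq n-2$, a range that is nonempty as soon as $n\geq4$: there $r_0\mapsto s_n$ must be commuted past a conjugate of $s_{n+1}$, and in $\mathrm{Ar}([W(A_{n-1})]^\alpha)$ the generators $s_n$ and $s_{n+1}$ satisfy no Coxeter relation at all---their only interaction is the $x$-relation \eqref{braid2}. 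So these commutations cannot be settled inside the Coxeter part of the diagram; in the paper's $n=4$ computation the check of $r_0t_2=t_2r_0$ explicitly invokes the ``inhomogeneous form''
\begin{equation*}
s_4s_1^{-1}s_5=s_1^{-1}s_2^{-1}s_3^{-1}s_5^{-1}s_3s_4^{-1}s_3^{-1}s_2^{-1}s_1^{-1}
\end{equation*}
of the $x$-relation. Your supporting inference---that only relations displaying inverse generators can absorb the $x$-relation---is not valid: after rearrangement the $x$-relation also enters commutations whose statement is homogeneous. As written, the step ``verify $r_0t_j=t_jr_0$ by type $A$ moves'' would simply not go through, and your base case $n=3$ would not expose the problem since the offending range of $j$ is empty there.

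Aside from this, the plan tracks the paper's proof: the $t_it_j$ commutations, the relations $r_it_j=t_jr_i$ with $i\geq1$, and the hexagonal identities indeed need only the Coxeter relations of the twice-extended diagram, and each boundary relation uses the $x$-relation exactly once; the reverse direction likewise recovers the $x$-relation from the $r_0$- and $t$-relations. The repair is therefore local: allow the $x$-relation (in a suitably conjugated/inhomogeneous form) in the verification of $r_0t_j=t_jr_0$ for $2\leq j\leq n-2$, and carry out the $n=4$ computations in full so that the rank-scaling claim has an honest anchor.
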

\begin{proof}
We stick with our notation: round brackets trigger their braid/$x$- relation while square brackets delimit the insertion of an identity.
The first two groups of relations, corresponding to the Coxeter diagram $A_{n-1}^{(1)}$, are obvious. For the remaining ones, we give again basic manipulations for the $n=4$ case involving all core types of relations, as they easily adapt with increasing rank:
\begin{align*}
    t_1t_2&=s^{-1}_2s^{-1}_3s_5(s_3s_2s^{-1}_3)s_5s_1s_3s_2=(s^{-1}_2s^{-1}_3s^{-1}_2)s_3s_5s_3s_2s_3s_1s_2=s^{-1}_3s_5s_3(s_2s_1s_2)=\\&=s^{-1}_3(s_5s_1[s^{-1}_5)s_3s^{-1}_3s_5]s_3s_2s_1=s^{-1}_3s^{-1}_1s_5s_1s_3[s_2s^{-1}_2]s^{-1}_3s_5s_3s_2s_1=t_2t_1,\allowdisplaybreaks\\[.5em]
    t_1t_3&=s^{-1}_2s^{-1}_3s_5s_3(s_2s_1\cdot s^{-1}_2)s^{-1}_1s_5s_1s_2s_3=s^{-1}_2(s^{-1}_3s_5s_3)s^{-1}_1s_5s_2s_1s_2s_3=s^{-1}_2s_5s_3(s^{-1}_5s^{-1}_1s_5)s_2s_1s_2s_3=\\&=s^{-1}_2s_5s_1s_3s^{-1}_5(s^{-1}_1s_2s_1)s_2s_3=s^{-1}_2s_5s_1s_3s^{-1}_5(s_2s_3[s^{-1}_2)s_2]s_1=s^{-1}_2s_5s_1(s_3s^{-1}_5s^{-1}_3)s_2s_3s_2s_1=\\&=s^{-1}_2(s_5s_1s^{-1}_5)s^{-1}_3s_2s_5s_3s_2s_1=s^{-1}_2s^{-1}_1s_5s_1(s^{-1}_3s_2[s_3)s^{-1}_3]s_5s_3s_2s_1=t_3t_1,\allowdisplaybreaks\\[.5em]    t_2t_3&=s^{-1}_1(s^{-1}_3s_5s_3)s_5s_1s_2s_3=s^{-1}_1s_5s_1(s_3s_2s_3)=s^{-1}_1s_5(s_1s_2[s^{-1}_1)s_1]s_3s_2=\\&=(s^{-1}_1s^{-1}_2[s^{-1}_1)(s_1]s_5s_1)s_2s_3s_1s_2=s^{-1}_2s^{-1}_1s_5(s^{-1}_2s_1s_2)s_5s_3s_1s_2=t_3t_2,\\[-3.5em]
    \end{align*}
    \begin{align*}
    r_1t_2r_1&=s^{-1}_3s_5s_3(s_1s_2s_1)=r_2t_1r_2=t_1t_2,\allowdisplaybreaks\\[.5em]
    r_2t_3r_2&=s^{-1}_1s_5s_1(s_2s_3s_2)=r_3t_2r_3=t_2t_3,\allowdisplaybreaks\\[.5em] r_0t_1r_0&=s_4s^{-1}_2(s^{-1}_3s_5s_3s_2s_1s_4)=s^{-1}_2s^{-1}_3s^{-1}_2s^{-1}_1s^{-1}_5s_1=(t_2t_3)^{-1},\allowdisplaybreaks\\[.5em]    r_0t_3r_0&=s_4s^{-1}_2(s^{-1}_1s_5s_1s_2s_3s_4)=s^{-1}_2s^{-1}_1s^{-1}_2s^{-1}_3s^{-1}_5s_3=(t_1t_2)^{-1},\allowdisplaybreaks\\[.5em]
    r_1t_3&=(s_1s^{-1}_2s^{-1}_1)s_5s_1s_2s_3=s^{-1}_2s^{-1}_1s_5(s_2s_1s_2)s_3=s^{-1}_2s^{-1}_1s_5s_1s_2s_3s_1=t_3r_1,\allowdisplaybreaks\\[.5em]
    r_3t_1&=(s_3s^{-1}_2s^{-1}_3)s_5s_3s_2s_1=s^{-1}_2s^{-1}_3s_5(s_2s_3s_2)s_1=s^{-1}_2s^{-1}_3s_5s_3s_2s_1s_3=t_1r_3,\allowdisplaybreaks\\[.5em]
    r_0t_2&=s_4s^{-1}_1(s^{-1}_3s_5s_3)s_1s_2=(s_4s^{-1}_1s_5)s_3s^{-1}_5s_1s_2=s^{-1}_1s^{-1}_2s^{-1}_3s^{-1}_5s_3s^{-1}_4(s^{-1}_3s^{-1}_2s_3)s^{-1}_1s^{-1}_5s_1s_2=\\&=s^{-1}_1s^{-1}_2s^{-1}_3s^{-1}_5s_3s_2(s^{-1}_4s^{-1}_3s^{-1}_2s^{-1}_1s^{-1}_5s_1)s_2=s^{-1}_1s^{-1}_2s^{-1}_3s^{-1}_5(s_3s_2s^{-1}_3)s_5s_3s_2s_1s_4s_2=\\&=s^{-1}_1(s^{-1}_2s^{-1}_3s^{-1}_2)(s^{-1}_5s_3s_5)s_2s_3s_2s_1s_2s_4=s^{-1}_1s^{-1}_3s_5(s^{-1}_2s^{-1}_3s_2)s_3s_2s_1s_2s_4=t_2r_0,
\end{align*}
with the last commutation relying on the ``inhomogeneous form'' $$s_4s^{-1}_1s_5=s^{-1}_1s^{-1}_2s^{-1}_3s^{-1}_5s_3s^{-1}_4s^{-1}_3s^{-1}_2s^{-1}_1$$
of the $x$-relation.

We finish by repeating these $n=4$ checks for the inverse assignments
\begin{equation}\begin{matrix}
        s_i & \mapsto & r_{i}, \quad i \leq 3,\hfill\\
        s_4 & \mapsto & r_0,\hfill\\
        s_5 & \mapsto & r_3r_2t_1r^{-1}_1r^{-1}_2r^{-1}_3,\hfill
    \end{matrix}
    \end{equation}
where we chose to write $s_5$, among the three equivalent (leftmost) expressions
\begin{align*}
    &r_1r_2t_3r^{-1}_3r^{-1}_2r^{-1}_1=r_1(r_2t_3)r^{-1}_3r^{-1}_2r^{-1}_1=r_1t_2t_3(r^{-1}_2r^{-1}_3r^{-1}_2)r^{-1}_1=r_1(t_2t_3r^{-1}_3)r^{-1}_2r^{-1}_3r^{-1}_1=\\=&r_1r_3t_2r^{-1}_2r^{-1}_3r^{-1}_1=r_3(r_1t_2)r^{-1}_2r^{-1}_1r^{-1}_3=r_3t_1t_2(r^{-1}_1r^{-1}_2r^{-1}_1)r^{-1}_3=r_3(t_1t_2r^{-1}_2)r^{-1}_1r^{-1}_2r^{-1}_3=\\=&r_3r_2t_1r^{-1}_1r^{-1}_2r^{-1}_3,
\end{align*}
 with respect to $t_1$:
\begin{align*}
    s_5s_1s_5&=r_3r_2t_1(r^{-1}_1r^{-1}_2)r_2t_1r^{-1}_1r^{-1}_2r^{-1}_3=r_3(r_2t_1r_2)r^{-1}_1t_1r^{-1}_1r^{-1}_2r^{-1}_3=r_3r_1(t_2t_1r^{-1}_1)r^{-1}_2r^{-1}_3=\\&=r_1r_3r_1t_2r^{-1}_2r^{-1}_3=r_1 \cdot r_1r_3t_2r^{-1}_2r^{-1}_3[r^{-1}_3 \cdot r_1]=s_1s_5s_1,\allowdisplaybreaks\\[.5em]
    s_2s_5&=(r_2r_3r_2)t_1r^{-1}_1r^{-1}_2r^{-1}_3=r_3r_2t_1r^{-1}_1(r_3r^{-1}_2r^{-1}_3)=r_3r_2t_1r^{-1}_1r^{-1}_2r^{-1}_3r_2=s_5s_2,\allowdisplaybreaks\\[.5em]
    s_5s_3s_5&=r_1r_2t_3r^{-1}_3r^{-1}_2r^{-1}_1 \cdot r_3 \cdot r_1r_2t_3r^{-1}_3r^{-1}_2r^{-1}_1=r_1r_2t_3(r^{-1}_3r^{-1}_2r_3)r_2t_3r^{-1}_3r^{-1}_2r^{-1}_1=\\&=r_1(r_2t_3r_2)r^{-1}_3t_3r^{-1}_3r^{-1}_2r^{-1}_1=r_3r_1(t_2t_3)r^{-1}_3r^{-1}_2r^{-1}_1=r_3r_1r_2t_3(r_2r^{-1}_3r^{-1}_2)r^{-1}_1=\\&=r_3\cdot r_1r_2t_3r^{-1}_3r^{-1}_2r^{-1}_1\cdot r_3=s_3s_5s_3,
\end{align*}\vspace{-3em}
\begin{align*}
s_3s^{-1}_4s^{-1}_3s^{-1}_2s^{-1}_1s^{-1}_5&=r_3r^{-1}_0(r^{-1}_3r^{-1}_2r_3)r^{-1}_1r_2r_1t^{-1}_1r^{-1}_2r^{-1}_3=r_3r^{-1}_0r_2r^{-1}_3r^{-1}_2(r^{-1}_1r_2r_1)t^{-1}_1r^{-1}_2r^{-1}_3\\&=r_3r^{-1}_0r_2r^{-1}_3r_1(r^{-1}_2t^{-1}_1r^{-1}_2)r^{-1}_3=r_3r_2r^{-1}_0r_1t^{-1}_1(r^{-1}_3t^{-1}_2r^{-1}_3)=\\&=r_3r_2r^{-1}_0r_1(t^{-1}_1t^{-1}_2)t^{-1}_3=r_3r_2(r^{-1}_0t^{-1}_2t^{-1}_3)r^{-1}_1=\\&=r_3r_2t_1[r^{-1}_1r^{-1}_2r^{-1}_3r_3r_2r_1]r_0r^{-1}_1=s_5s_3s_2s_1s_4s^{-1}_1.
\end{align*}
Notice that, to check $s_5s_3s_5=s_3s_5s_3$, it was more convenient to express $s_5$ with respect to $t_3$. 
\end{proof}
The proof of \Cref{thm:second} is thus completed.
\begin{remark}
    Thanks to formulae \eqref{braidonezero}, we can now get to grips with the mysterious $x$-relation \eqref{braid2}. For special configurations, translating along all lattice generators amounts to the identity:
    \begin{equation}\label{unital}
        t_0 \cdots t_{n-1}=1.
    \end{equation}
    Therefore, reducing the generating set ends up modifying a pair of van der Lek's ``pushrelations'' \cite[Remark 2.8]{vanderLek1983} in form $t_jr_i=r_i^{-1}(\ldots)$. It is then easy to check that the $x$-relation plays the role of a reflection codification for such modified relations. For $n=3$, a visual take on the unital condition \eqref{unital} and the $x$-relation is respectively given in Figures \ref{fig:special} and \ref{fig:braid2}.  
\end{remark}
\begin{figure}[t!]
    \centering
    \includegraphics[width=30em]{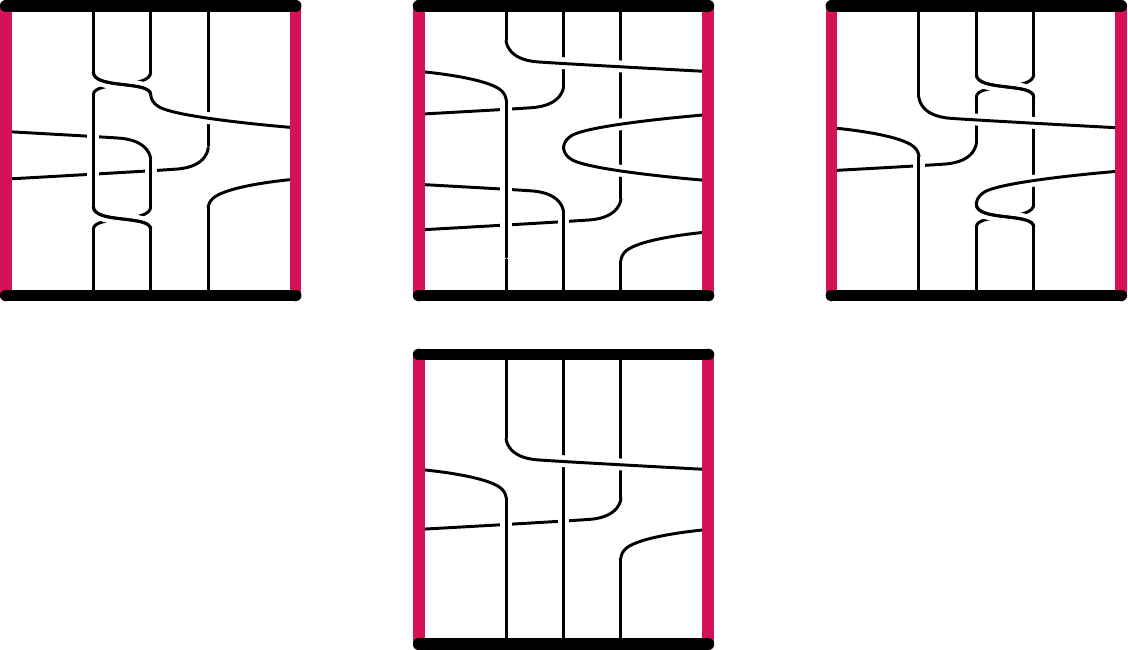}
    \caption{The $n=3$ visualization of the pushrelation $r_1t_2r_1=t_2t_1=r_2t_1r_2$, together with a meridional side view (bottom) simplifying $t_2t_1$ into the translation $\alpha e_1-\alpha e_3$ as expected by the special configuration condition. In configuration terms, $r_i$ swaps strand $i$ over strand $i+1$ while $t_j$, corresponding to the vector $\alpha e_j-\alpha e_{j+1}$, loops along the longitude strands $j$ and $j+1$ in opposite directions---with strand crossing ruled by the points' diagonal arrangement in the Conventions. E.g., $t_2$ loops the second point rightward which, from the meridional side view, is seen crossing over the third point but under the first.}
    \label{fig:special}
\end{figure}
\begin{figure}[t!]
    \centering
    \includegraphics[height=8em]{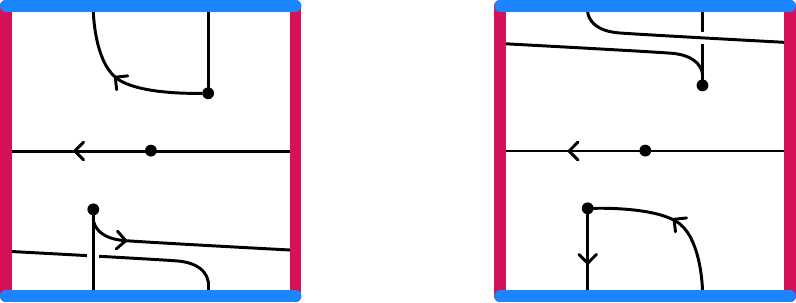}
    \caption{The $n=3$ visualization of the ``special pushrelation'' $r_0t_1=t_2^{-1}r_0^{-1}$, which is best seen from the top. In configuration terms, $r_0$ swaps the last strand over the first while looping them along the \emph{meridian} (as translations now correspond to the first copy of the lattice) in opposite directions.}
    \label{fig:braid2}
\end{figure}
In fact, we independently discovered formulae \eqref{pi1SUC} by central evaluation ($t_0^{-1}=t_1\cdots t_{n-1}$) of van der Lek's presentation \cite[(2.12)]{vanderLek1983} for the extended Artin group ${\mathrm{\tilde{A}r}}(A_{n-1}^{(1)})$. As anticipated in \Cref{rmk:conj}, this abstract group is isomorphic to the braid group of the hyperbolic extension $\tilde{W}(A^{(1,1)}_n)$.
Therefore, we find ourselves with a proof in type $A$ of an appealing fact: the signature topological feature of van der Lek's presentation persists after central evaluation, namely setting the central generator to the unit delivers a presentation for the braid group of the elliptic Weyl group. To the best of our knowledge, no conceptual proof of this phenomenon is available, motivating our formulation of the following  
\begin{conjecture}\label{conj:vdL}
    Let $R$ be any elliptic root system, with underlying affine part $R_a$, and denote by $\tilde{c}$ the hyperbolic Coxeter element of $\tilde{W}(R)$ such that $\tilde{W}(R)/\langle\tilde{c}\rangle \simeq W(R)$ \cite{ST1997}. 
    Then,
    \begin{equation}
        \mathrm{\tilde{A}r}(R_a)\,/\,\langle\tilde{c}\rangle\ \simeq\ \mathrm{Br}(W(R)).
    \end{equation}
\end{conjecture}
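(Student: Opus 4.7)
The plan is to lift the group-theoretic central extension $1 \to \langle\tilde{c}\rangle \to \tilde{W}(R) \to W(R) \to 1$ to a topological fibration of regular orbit spaces, and then extract the conjectured isomorphism via the long exact sequence in homotopy. By van der Lek's theorem, $\mathrm{\tilde{A}r}(R_a) \simeq \pi_1\bigl(N_{\tilde{W}(R)}/\tilde{W}(R)\bigr)$. Since $\tilde{c}$ is central and acts freely as translation in the hyperbolic direction enlarging the elliptic space $V$ to $\tilde{V}$, the intermediate quotient $\tilde{V}/\langle\tilde{c}\rangle$ is a $\mathbb{C}^*$-bundle over $V$ on which the residual action of $W(R) = \tilde{W}(R)/\langle\tilde{c}\rangle$ descends. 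Restricting to regular loci and passing to $W(R)$-orbit spaces should yield a fibration
\[
\mathbb{C}^* \longrightarrow N_{\tilde{W}(R)}/\tilde{W}(R) \longrightarrow N_{W(R)}/W(R).
\]

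Feeding this fibration into its long exact sequence in homotopy produces
\[
\pi_2\bigl(N_{W(R)}/W(R)\bigr) \longrightarrow \mathbb{Z} \longrightarrow \mathrm{\tilde{A}r}(R_a) \longrightarrow \B(W(R)) \longrightarrow 1,
\]
with $\mathbb{Z} = \pi_1(\mathbb{C}^*)$ mapping onto $\langle\tilde{c}\rangle$ by construction. If the connecting $\pi_2$-term vanishes, this collapses to the short exact sequence
\[
1 \longrightarrow \langle\tilde{c}\rangle \longrightarrow \mathrm{\tilde{A}r}(R_a) \longrightarrow \B(W(R)) \longrightarrow 1,
\]
which, upon quotienting the middle by the kernel, is precisely the conjectured isomorphism.

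The main obstacle is thus the elliptic $K(\pi,1)$ problem: showing that $N_{W(R)}/W(R)$ is aspherical, so that the $\pi_2$-term disappears and the sequence above is pinned down. This is the natural elliptic sibling of Deligne's finite-type asphericity and of the recently settled affine $K(\pi,1)$ conjecture of Paolini--Salvetti, and to my knowledge remains open in full generality; a uniform proof would also have to rule out any pathological cocycle twisting in the fibration that might stop $\langle\tilde{c}\rangle$ from injecting. In the absence of a systemic argument, a realistic fallback is a case-by-case verification along Saito's classification of elliptic root systems: the author's own type $A$ computation (\Cref{lem:braidonezero}) and the type $C$ evidence of \Cref{rmk:IonSahi2} already anchor two classical families, and the remaining series could be tackled by expressing $\tilde{c}$ as an explicit word in van der Lek's generators and tracing how the pushrelations collapse modulo $\tilde{c}$ into a reflection-style presentation of $\B(W(R))$---a purely algebraic route that circumvents asphericity at the cost of losing uniformity.
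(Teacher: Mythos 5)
You should note first that the statement you set out to prove is precisely what the paper leaves open: it is recorded as \Cref{conj:vdL} because the author could not locate or produce a uniform argument, and the paper only supplies supporting evidence in type $A$ (\Cref{lem:braidonezero}) and type $C$ (\Cref{rmk:IonSahi2}), obtained by explicit manipulation of presentations. Your proposal, as you yourself concede, is a strategy rather than a proof, so it does not settle the conjecture either; judged as a strategy, however, two points deserve comment.

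First, the obstacle you single out as the main one---vanishing of the $\pi_2$-term, i.e.\ an elliptic $K(\pi,1)$ statement---is not actually needed for the conjectured isomorphism. Exactness of the homotopy sequence at $\pi_1$ of the total space already gives $\pi_1\bigl(N_{W(R)}/W(R)\bigr)\simeq \pi_1\bigl(N_{\tilde{W}(R)}/\tilde{W}(R)\bigr)/\operatorname{im}\pi_1(\mathbb{C}^*)$ irrespective of $\pi_2$ of the base; asphericity only bears on the stronger claim that the $\mathbb{Z}$ injects, i.e.\ that $\tilde{c}$ has infinite order in $\mathrm{\tilde{A}r}(R_a)$, which the conjecture does not assert. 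The genuine gaps are instead the two steps you pass over quickly: (i) that the map of regular orbit spaces is a locally trivial $\mathbb{C}^*$-fibration for an \emph{arbitrary} elliptic root system---since $W(R)$ acts on the auxiliary hyperbolic coordinate through a cocycle, a $W(R)$-singular point can a priori still be $\tilde{W}(R)$-regular, so one must prove that the $\tilde{W}(R)$-regular locus is exactly the preimage of the $W(R)$-regular one, and one must also fix which regular locus defines $\mathrm{Br}(W(R))$ when $W(R)$ fails the Steinberg property, as well as check that van der Lek's identification of $\mathrm{\tilde{A}r}(R_a)$ with the fundamental group of the regular orbit space of $\tilde{W}(R)$ holds in this generality; and (ii) the phrase ``mapping onto $\langle\tilde{c}\rangle$ by construction'': one must identify the image of the fiber generator, under van der Lek's isomorphism, with the specific word in the generators of $\mathrm{\tilde{A}r}(R_a)$ representing the hyperbolic Coxeter element. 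That identification is exactly the content of the paper's computations in types $A$ and $C$, and it is where the real work lies; until (i) and (ii) are carried out uniformly, your argument does not advance beyond the evidence already contained in the paper.
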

\begin{remark}\label{rmk:IonSahi2}
    Combining \Cref{rmk:IonSahi1} with \Cref{lem:braidfourzero}, we further support the conjecture with a positive answer also in type $C$, namely $R=C^{(1,1)}$. Indeed, $[G(2,1,n)]^\alpha_1 \simeq W(C^{(1,1)}_n)$ and it is immediate to see that central evaluation reduces Ion-Sahi's presentation \cite[Proposition 5.6]{IS2020} to our presentation for $\mathrm{Ar}([G(2,1,n)]^\alpha_1)$, i.e., $\mathrm{\tilde{A}r}(C^{(1)}_n)/\langle\dutchcal{C}\rangle\simeq\mathrm{Br}(W(C^{(1,1)}_n))$.
\end{remark}

\section{Generic Hecke algebras}\label{sec:Hecke}

This final section is the culmination of our topological investigations: after defining the generic Hecke algebra, we first match the central evaluation of the expected EHA in type $A$, and then give GDAHA the promised genesis in crystallographic complex reflection terms.

Denoting by $\mathrm{Hyp}(W)$ the set of reflection hyperplanes associated with the reflection group $W$ and by $W_H$ the pointwise stabilizer of the hyperplane $H\in\mathrm{Hyp}(W)$, let
\begin{equation}
    \big\{ \ \mymathbb{s}_{[H],k} \ : \ [H]\in\mathrm{Hyp}(W)/W, \ 1\leq k \leq |W_H| \ \big\}
\end{equation}
be a collection of formal parameters ranging in both the conjugacy classes of hyperplanes and $1\leq k \leq e_{[H]}:=|W_H|$.
For a reflection $\sg{i}$ of order $e_i$ that fixes $H_i\in\mathrm{Hyp}(W)$, define $\{\mymathbb{s}_{ij}\}_{1\leq j \leq e_i}$ to be the corresponding subcollection of parameters, i.e., $\mymathbb{s}_{ij}:=\mymathbb{s}_{[H_i],j}$. In particular, reflections whose hyperplanes lie in the same orbit---which is always the case when conjugate in $W$---share the subcollections while, for $n\in\naturali$ such that $e_i/n\in\naturali$, the $n$-th power of a reflection $\sg{i}$ gets the smaller subcollection $\{\mymathbb{s}_{ik}\}_{1 \leq k \leq \nicefrac{e_i}{n}}$. 

Hereafter, let $W$ be a Steinberg crystallographic complex reflection group in an infinite family (with reflection presentation as in Tables \ref{tab:nongenuine} or \ref{tab:genuine}) and denote by $\sg{0}$ the reflection singled out by the group's extra relation of order $e_0\in\naturali$.
Supported by our algebraic braid theorems, we give the following
\begin{definition}\label{def:Hecke}
    The \emph{generic Hecke algebra} of $W$ is defined as the $\complex[\mymathbb{s}_{ij}^{\pm1}]$-algebra with generators $S_i$ in bijection with the nodes of the group's reflection presentation, which satisfy the relations of the Artin group $\mathrm{Ar}(W)$ together with the (generic) Hecke ones
    \begin{equation}\label{generic-Hecke}
        \prod_{1\leq j\leq e_i}(S_i-\mymathbb{s}_{ij})=0,\quad\hspace{.45em}\text{ for }\sg{i}^{e_i}=1,\ i\geq0,
    \end{equation}    
    where $S_0:=\sigma_0\hspace{-.1em}\raisebox{-.2ex}[-1ex][-1ex]{|}_{\sigma\leftarrow S}$ is obtained by replacing each reflection in $\sg{0}=\prod_k\sg{k}$ with the corresponding Hecke generator $S_k$.
\end{definition}
Notice that, as usual, under the ``cyclotomic'' specialization $\mymathbb{s}_{*j} \mapsto e^{2\pi ij/e_*}$ the generic Hecke algebra manifests as a deformation of the group algebra $\complex W$.

Before attacking GDAHA, for the sake of completeness in non-genuine deformations, let us explain the connection between $[W(A_{n-1})]^\alpha$ and the theory of elliptic Hecke algebras. Following a suggestion by Bogdan Ion, one can define a third additional generator for $\mathrm{Ar}([W(A_{n-1})]^\alpha)$ as
\begin{equation}
    s_{n+2}:=(s_{n}s_{n+1}s_{n-1}\cdots s_1\cdots s_{n-1})^{-1}=\mathbf{s}^{-1}s_0^{-1}\mathbf{s}, \quad \mathbf{s}:=s_1\cdots s_{n-1},
\end{equation}
and easily verify that, provided the evaluation $\dutchcal{C}=1$, the resulting presentation coincides with Ion-Sahi's triple dot group $\mathbf{B}(\dddot{A}_{\!n-1})$ \cite{IS2020}. In particular, the defining factors of the central element $\dutchcal{C}$ correspond as
\begin{equation}
    \Theta_{01} \ \leftrightarrow \ s_{n+2}, \qquad \Theta_{02} \ \leftrightarrow \ s_{n}, \qquad \Theta_{03} \ \leftrightarrow \ s_{n+1}, \qquad \Theta \ \leftrightarrow \ s_{n-1}\cdots s_1\cdots s_{n-1}. 
\end{equation}E.g., for $n=4$
\begin{equation*}
    s_2s_6=(s_2s^{-1}_3s^{-1}_2)s^{-1}_1s^{-1}_2s^{-1}_3s^{-1}_5s^{-1}_4=s^{-1}_3s^{-1}_2s^{-1}_1(s_3s^{-1}_2s^{-1}_3)s^{-1}_5s^{-1}_4=s_6s_2,
\end{equation*}
and the following identities show that the $x$-relation indeed makes $s_{n+2}$ turn the diagram to a \emph{triple} affine extension:
\begin{align*}    s_1s_6s_1&=s^{-1}_3s_1(s^{-1}_2s^{-1}_1s^{-1}_2)s^{-1}_3s^{-1}_5s^{-1}_4s_1=s^{-1}_3s^{-1}_2s^{-1}_1(s^{-1}_3[s^{-1}_2s^{-1}_3)s^{-1}_5s_5s_3s_2]s^{-1}_5(s^{-1}_4s_1[s_4)s^{-1}_4]=\\&=s^{-1}_3s^{-1}_2s^{-1}_1s^{-1}_2s^{-1}_3s^{-1}_5s^{-1}_2(s_5s_3s^{-1}_5)s_2s_1s_4s^{-1}_1s^{-1}_4=\\&=s^{-1}_3s^{-1}_2s^{-1}_1s^{-1}_2s^{-1}_3s^{-1}_5s^{-1}_2s^{-1}_3(s_5s_3s_2s_1s_4s^{-1}_1)s^{-1}_4=\\&=s^{-1}_3s^{-1}_2s^{-1}_1s^{-1}_2s^{-1}_3s^{-1}_5s^{-1}_4[s_1s_1^{-1}](s^{-1}_2s^{-1}_3s^{-1}_2)s^{-1}_1s^{-1}_5s^{-1}_4=\\&=s^{-1}_3s^{-1}_2s^{-1}_1s^{-1}_2s^{-1}_3s^{-1}_5s^{-1}_4s_1s^{-1}_3(s_1^{-1}s^{-1}_2s^{-1}_1)s^{-1}_3s^{-1}_5s^{-1}_4=s_6s_1s_6,\allowdisplaybreaks\\[.5em]
s_6s_3s_6&=s^{-1}_3(s^{-1}_2s^{-1}_1s^{-1}_2)s^{-1}_3s^{-1}_2s^{-1}_5(s^{-1}_4s^{-1}_1s^{-1}_2s^{-1}_3s^{-1}_5)s^{-1}_4=\\&=s^{-1}_1s^{-1}_3s^{-1}_2s^{-1}_3s^{-1}_1s^{-1}_2s^{-1}_5(s^{-1}_1s_5s_1)s_2(s_3s_4s^{-1}_3)s^{-1}_4=\\&=s^{-1}_1s^{-1}_3s^{-1}_2s^{-1}_3(s^{-1}_1s^{-1}_2s_1)s_2s^{-1}_5s^{-1}_4s_3=s^{-1}_1(s^{-1}_3s^{-1}_2s^{-1}_3)s_2s^{-1}_1s^{-1}_5s^{-1}_4s_3=\\&=[s_3s_3^{-1}](s^{-1}_1s^{-1}_2s^{-1}_1)s^{-1}_3s^{-1}_5s^{-1}_4s_3=s_3s_6s_3.
\end{align*}
Therefore, as a straightforward corollary of \cite[Theorem 11.5]{IS2020}, we get the following
\begin{theorem}
    Provided the specialization
    \begin{equation*}
            \mymathbb{s}_{i2}=:\mymathbb{s}_{2} \ \mapsto \ -\mymathbb{s}_{1}^{-1}:=-\mymathbb{s}_{i1}^{-1}, \quad 0\leq i \leq n+1,
    \end{equation*}
    the generic Hecke algebra of $[W(A_{n-1})]^\alpha$, $n\geq3$, is isomorphic to the DAHA of type $A_{n-1}^{(1)}$ at $q=1$.
\end{theorem}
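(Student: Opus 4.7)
The plan is to reduce the statement to \cite[Theorem 11.5]{IS2020} via the identification, already set up in the paragraphs preceding the theorem, between $\mathrm{Ar}([W(A_{n-1})]^\alpha)$ and the central evaluation at $\dutchcal{C}=1$ of Ion--Sahi's triple dot group $\mathbf{B}(\dddot{A}_{n-1})$. The first step is to observe that \Cref{prop:conj-class} gives a single orbit of hyperplanes for $[W(A_{n-1})]^\alpha$, so the formal parameters $\mymathbb{s}_{ij}$ of \Cref{def:Hecke} collapse to just two, $\mymathbb{s}_1:=\mymathbb{s}_{i1}$ and $\mymathbb{s}_2:=\mymathbb{s}_{i2}$, independently of the index $i$, $0\leq i\leq n+1$. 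The Hecke relations \eqref{generic-Hecke} then reduce to the uniform quadratic $(S_i-\mymathbb{s}_1)(S_i-\mymathbb{s}_2)=0$, and the prescribed specialization $\mymathbb{s}_2\mapsto-\mymathbb{s}_1^{-1}$ turns them into the standard DAHA normalization $(S_i-\mymathbb{s}_1)(S_i+\mymathbb{s}_1^{-1})=0$.

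Next, adjoin the auxiliary Hecke generator
\begin{equation*}
    S_{n+2}:=(S_nS_{n+1}S_{n-1}\cdots S_1\cdots S_{n-1})^{-1},
\end{equation*}
mimicking the braid-level definition just introduced in the paper. The commutation $s_2s_6=s_6s_2$ and the two cubic relations $s_1s_6s_1=s_6s_1s_6$, $s_6s_3s_6=s_3s_6s_3$ verified above for $n=4$ use only the Artin relations of $\mathrm{Ar}([W(A_{n-1})]^\alpha)$ and the $x$-relation; crucially, they do not invoke any quadratic. Hence they lift verbatim to the generic Hecke algebra, producing $S_{n+2}$ as a third affine node. In other words, the extended presentation one obtains coincides, by the bijection on generators already established, with the central specialization $\dutchcal{C}=1$ of $\mathbf{B}(\dddot{A}_{n-1})$ quotiented by the quadratic $(S_i-\mymathbb{s}_1)(S_i+\mymathbb{s}_1^{-1})=0$ at every node.

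Finally, invoking \cite[Theorem 11.5]{IS2020}, this quotient is by definition the DAHA of type $A_{n-1}^{(1)}$, with the central element $\dutchcal{C}$ of $\mathbf{B}(\dddot{A}_{n-1})$ corresponding to the DAHA parameter $q$; in particular, our central evaluation $\dutchcal{C}=1$ translates into $q=1$, concluding the proof.

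The main (and essentially only) obstacle is the bookkeeping in the second step: verifying that the three relations tying $S_{n+2}$ to $S_1,S_2,S_3$ (and their analogues in general rank $n$) genuinely require nothing beyond the Artin relations of the presentation in \Cref{tab:nongenuine}, so that adjoining $S_{n+2}$ commutes with the Hecke deformation. Given that the corresponding braid-level computations have already been displayed explicitly for $n=4$ and manifestly extend inductively, this amounts to a routine rank-agnostic check rather than a substantive difficulty.
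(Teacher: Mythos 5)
Your proposal is correct and follows essentially the same route as the paper: the paper likewise adjoins $s_{n+2}=\mathbf{s}^{-1}s_0^{-1}\mathbf{s}$, identifies the resulting presentation (at $\dutchcal{C}=1$) with Ion--Sahi's triple dot group $\mathbf{B}(\dddot{A}_{n-1})$, and then deduces the theorem as a straightforward corollary of \cite[Theorem 11.5]{IS2020}, with the parameter collapse coming from the single orbit of hyperplanes exactly as you argue.
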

Read in the language of \cite[Definition 3.2.3]{SS2009}, the theorem involves the ``small'' version of the DAHA, and can be thus equivalently stated as the anticipated match with the central evaluation of Saito-Shiota's EHA in type $A_{n-1}^{(1,1)}$.

\subsection{GDAHA revisited}

We are now finally ready to prove our third and last result \ref{thm:third}. 

\begin{definition}\label{def:GDAHA}
The \emph{generalized double affine Hecke algebra} of rank $n$, attached to the star-shaped simply laced Coxeter diagram $\pazocal{D}$ with $k=1,\ldots,m$ legs each of length $d_k$, is defined as
    \begin{equation}
    \complex[\mymathbb{u}_{kj}^{\pm1},\mymathbb{t}^{\pm1}]\left\langle\ \begin{matrix}
        U_1,\cdots,U_m,\hfill\\T_1,\cdots,T_{n-1}
    \end{matrix}\ \left|\ \begin{array}{c}
        \prod_{j=1}^{d_k}(U_k-\mymathbb{u}_{kj})=(T_i-\mymathbb{t})(T_i+\mymathbb{t}^{-1})=0,\hfill\\
        U_1\cdots U_mT_1 \cdots T_{n-1}T_{n-1}\cdots T_1=1,\hfill\\
        T_iT_j=T_jT_i  \text{\hspace{1.5em}for\hspace{.5em}} i-j>1,\hfill\\
        T_iT_{i+1}T_i=T_{i+1}T_iT_{i+1}\text{\hspace{1.5em}for\hspace{.5em}} i \leq n-2,\hfill\\
        U_iT_j=T_jU_i \text{\hspace{1.5em}for\hspace{.5em}} j\geq2,\hfill\\
        U_iT_1U_iT_1=T_1U_iT_1U_i,\hfill\\
        U_iT^{-1}_1U_jT_1=T^{-1}_1U_jT_1U_i\text{\hspace{1.5em}for\hspace{.5em}}i<j\hfill

    \end{array}\right.\right\rangle.
\end{equation}
\end{definition}
\begin{theorem}\label{thm:GDAHA}
    Up to parameter specialization, the generic Hecke algebras for the crystallographic complex reflection groups $[G(2,1,n)]^\alpha_1$, $[G(3,1,n)]_1$, $[G(4,1,n)]_{1}$, and $[G(6,1,n)]$ are isomorphic to the rank $n$ (affine Dynkin) GDAHAs, respectively for $\pazocal{D}=D_4^{(1)},E_6^{(1)},E_7^{(1)},$ and $E_8^{(1)}$.
\end{theorem}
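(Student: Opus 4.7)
The plan is to realize both algebras as Hecke deformations of a common braid group, and then match the polynomial relations via an explicit parameter specialization.

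For the non-genuine $D_4^{(1)}$ case, I would chain the isomorphisms
\[
\mathrm{Ar}([G(2,1,n)]_1^\alpha)\ \simeq\ \mathrm{Br}([G(2,1,n)]_1^\alpha)\ \simeq\ \pi_1(\mathrm{UC}_n(\Sigma_{0,4}))
\]
provided by \Cref{thm:second} and \Cref{lem:braidfourzero} to read off an explicit generator dictionary. A side-by-side comparison of presentation \eqref{p1sigma04} against \Cref{def:GDAHA} shows that the GDAHA of type $D_4^{(1)}$ is \emph{by construction} the Hecke deformation of $\pi_1(\mathrm{UC}_n(\Sigma_{0,4}))$: the non-polynomial relations of the GDAHA are precisely those of \eqref{p1sigma04}. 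Invoking the five conjugacy classes of hyperplanes from \Cref{prop:conj-class}, the five pairs of generic Hecke parameters $(\mymathbb{s}_{*,1},\mymathbb{s}_{*,2})$ split as follows: four pairs attach to the $U_k$'s (one conjugacy class per endpoint), while the remaining pair---carried by the common middle $A$-type class of $\sg{2},\ldots,\sg{n}$---is specialized to $(\mymathbb{t},-\mymathbb{t}^{-1})$ to recover the $T_i$ relation.

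For the three genuine $E_d^{(1)}$ families ($d\in\{3,4,6\}$), the argument would proceed analogously: the topological identification \eqref{E-typeiso} from the Introduction plays the role of \Cref{lem:braidfourzero}, showing that the regular orbit spaces of $[G(d,1,n)]_1$ and of the corresponding $E_{6,7,8}^{(1)}$-type GDAHAs coincide. An explicit generator correspondence in the spirit of \Cref{lem:braidfourzero} would then match the Hecke polynomials leg by leg: the short legs of $E_d^{(1)}$ provide $U_k$'s of prescribed orders $d_k>2$ dictated by $d\in\{3,4,6\}$, whose characteristic polynomials specialize the generic ones attached to the corresponding higher-order reflections of $[G(d,1,n)]_1$; the long leg, as before, handles the $T_i$'s via the quadratic $A$-type parameters. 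Malle's classification of hyperplane orbits \cite{Malle1996} ensures that the parameter counts line up exactly on both sides.

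The main obstacle lies in the $E$-type verification: while $D_4^{(1)}$ closes by direct comparison of presentations thanks to \Cref{sec:braid}, the three $E$-type instances require lifting the abstract topological identification \eqref{E-typeiso} to a compatible match of reflection/GDAHA presentations. Concretely, one must exhibit specific conjugates of the $s_i$-generators of $\mathrm{Ar}([G(d,1,n)]_1)$ realizing the $U_k$'s with prescribed orders and satisfying the GDAHA closedness relation $U_1\cdots U_m T_1\cdots T_{n-1}T_{n-1}\cdots T_1=1$, which corresponds to the extra order relation of \Cref{tab:genuine} encoded via the Hecke generator $S_0$. Once this algebraic-topological bridge is in place, the parameter specialization is forced by matching the Hecke polynomial root sets on either side.
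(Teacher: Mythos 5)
Your treatment of the $D_4^{(1)}$ case is essentially the paper's argument: the proof there is exactly a ``capitalization'' of \Cref{lem:braidfourzero}, observing that \Cref{def:GDAHA} is the Hecke deformation of presentation \eqref{p1sigma04}, and then specializing the middle $A$-type pair of parameters to $(\mymathbb{t},-\mymathbb{t}^{-1})$ while sending the four remaining classes (represented by $\sg{1},\sg{n+1},\sg{n+2}$ and the extra-relation reflection $\sg{0}$, the latter with an inversion, $\mymathbb{s}_{0j}\mapsto\mymathbb{u}_{1j}^{-1}$) to the $U_k$-parameters. That half of your proposal is complete and correct.

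For the three $E$-type families, however, you stop precisely where the theorem's content lies. You correctly identify that one must exhibit explicit words in the $S_i$ realizing the $U_k$ and verify the GDAHA relations, but you defer this as ``the main obstacle'' to be filled in later; the paper closes it by writing down the dictionary \eqref{E-typeiso}, namely $U_1\mapsto(S_1\cdots S_{n+1}\cdots S_2)^{-1}$, $U_2\mapsto S_1$, $U_3\mapsto\mathbf{S}\,S_{n+1}\mathbf{S}^{-1}$, $T_i\mapsto S_{i+1}$ with $\mathbf{S}=S_2\cdots S_n$, and by noting that the verification mirrors the computations of \Cref{lem:braidfourzero} (the three-puncture analogue of \eqref{p1sigma04}, with no $x$-relation and one fewer $u$-generator), yielding $\pi_1(\mathrm{UC}_n(\Sigma_{0,3}))\simeq\mathrm{Ar}([G(d,1,n)]_1)$. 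Without this explicit bridge your argument does not establish the isomorphism for $E_{6,7,8}^{(1)}$, so the proposal is genuinely incomplete for three of the four families. A further inaccuracy: it is not true that the legs provide $U_k$'s ``of prescribed orders $d_k>2$''---for $E_7^{(1)}$ the degrees are $(4,4,2)$ and for $E_8^{(1)}$ they are $(6,3,2)$, so one $U_k$ is quadratic---and the parameter count only lines up because the reflection presentations of \Cref{tab:genuine} use generators of the correspondingly smaller orders together with the extra order relation carried by $S_0$; asserting that all short-leg degrees are dictated by $d=\max\{d_k\}$ would break the matching of Hecke polynomial degrees that your final specialization step relies on.
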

\begin{proof}
    The first match follows from ``capitalization'' of \Cref{lem:braidfourzero}, as the rank $n$ GDAHA of type $D_4^{(1)}$, itself nothing but Sahi's DAHA of type $(\check{C}_n,C_n)$ \cite[Proposition 3.3.2]{EGO2006} and thus equivalently Saito-Shiota's EHA of type $C_n^{(1,1)}$ \cite{SS2009}, deforms the braid group of the four-punctured sphere. Deformation \ref{def:Hecke} is indeed the most generic, and recovers the one defining the GDAHA via the specialization
    \begin{equation}\label{specialization}
        \begin{matrix}
            \mymathbb{s}_{i1} \ \mapsto \ \mymathbb{t}, & \mymathbb{s}_{i2} \ \mapsto \ -\mymathbb{t}^{-1}, & 2\leq i \leq n,
        \end{matrix}
    \end{equation}
    and identifications
        $\mymathbb{s}_{0j} \mapsto \mymathbb{u}_{1j}^{-1},\ \mymathbb{s}_{1j} \mapsto \mymathbb{u}_{2j},\ \mymathbb{s}_{n+1,j} \mapsto \mymathbb{u}_{3j},\ \mymathbb{s}_{n+2,j} \mapsto \mymathbb{u}_{4j}$.

    For types $E^{(1)}_{6,7,8}$, the isomorphism is provided by the simpler assignments
    \begin{equation}\label{E-typeiso}
    \begin{matrix}
        U_1 & \mapsto & (S_1\cdots S_{n+1}\cdots S_2)^{-1},\hfill\\
        U_2 & \mapsto & S_1,\hfill\\
        U_3 & \mapsto & \mathbf{S}\,S_{n+1}\mathbf{S}^{-1},\hfill\\
        T_i & \mapsto & S_{i+1},\hfill
    \end{matrix}
    \end{equation}
    where again $\mathbf{S}:=S_2\cdots S_n$. Indeed, for $d:=\mathrm{max}\{d_k\}\geq3,$ mirroring the proof of \Cref{lem:braidfourzero} one gets the braid theorem in form
    \begin{equation}
    \pi_1(\mathrm{UC}_n(\Sigma_{0,3})) \ \simeq \ \mathrm{Ar}([G(d,1,n)]_1),
    \end{equation}
    and the Hecke relations identify under the same specialization \eqref{specialization} and analogous identifications $\mymathbb{s}_{0j} \mapsto \mymathbb{u}_{1j}^{-1},\ \mymathbb{s}_{1j} \mapsto \mymathbb{u}_{2j},\ \mymathbb{s}_{n+1,j} \mapsto \mymathbb{u}_{3j}$.
\end{proof}

\begin{remark}
Under further specialization, for $n=1$ we recover the original definition of the GDAHA in rank one \cite[§5.1]{EOR2007}. E.g., adopting the notation as in loc. cit. for type $D^{(1)}_4$, it suffices to identify the generators as
\begin{equation}
    \begin{matrix}
        S_0 & \mapsto & qT_1^{-1},\hfill\\
        S_i & \mapsto & T_{i+1},\quad 1\leq i \leq 3,
    \end{matrix}
\end{equation}
and specialize all Hecke parameters:
\begin{equation}
    \begin{matrix}
        \mymathbb{s}_{01} \ \mapsto \ -qt_{11}, & \mymathbb{s}_{02} \ \mapsto \ qt_{11}^{-1},\hfill\\
        \mymathbb{s}_{11} \ \mapsto \ t_{21},\hfill & \mymathbb{s}_{12} \ \mapsto \ -t_{21}^{-1},\\
        \mymathbb{s}_{21} \ \mapsto \ t_{31},\hfill & \mymathbb{s}_{22} \ \mapsto \ -t_{31}^{-1},\\
        \mymathbb{s}_{31} \ \mapsto \ t_{41},\hfill & \mymathbb{s}_{32} \ \mapsto \ -t_{41}^{-1}.\\
    \end{matrix}
    \end{equation}
In particular, notice that adopting generic Hecke relations \eqref{generic-Hecke} maximizes the number of essential parameters, allowing to absorb $q$ through scaling.
\end{remark}
\begin{remark}
    In hindsight, \Cref{thm:GDAHA} dutifully proves the observation in \cite[Example 6.2]{EM2008}, clarifying all details with a transparent purely algebraic viewpoint.
\end{remark}
\begin{remark}
The remaining groups $[G(4,1,n)]_{2}$, $[G(4,2,n)]_{1,2}$, and $[G(6,2,n)]$, $[G(6,3,n)]$, are subgroups of $[G(4,1,n)]_{1}$ and $[G(6,1,n)]_{1}$, respectively \cite[Lemmas 3.2 and 3.3]{Malle1996}, by restricting some of the generators to their conjugates and/or squares.
On the contrary, such restriction---which hinges on the order relations---fails to capture the subgroups' generic Hecke algebras as (Hecke) subalgebras. We postpone a detailed study of these missing algebras to future work, as we hunt for a conceptual (deformation) theory encompassing crystallographic complex reflection groups as a whole (including non-Steinberg and, possibly, sporadic). 
\end{remark}

\appendix

\begin{landscape}
\section{Non-genuine Steinberg families}\label{app1}
\begin{table}[!h]
\begin{tabular}{ccccccc}\toprule
$\hspace{-.5em}W$ & & $\pazocal{D}$ & & $x$-relation & & extra order relation\\
\midrule\\
$[W(A_1)]^\alpha$ & & \raisebox{-2.75em}{\includegraphics[height=6em]{gfx/1A1.pdf}} & & none\!\!\! & & $(\sg{1}\sg{2}\sg{3})^2$ \\[3em]
$[W(A_{n-1})]^\alpha$, $n\geq3$ & & \raisebox{-2.75em}{\includegraphics[height=6em]{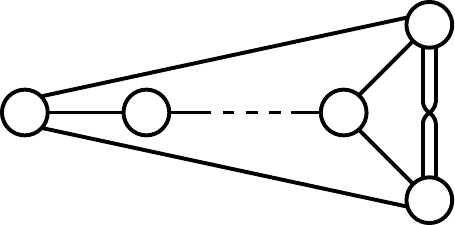}} & & $\sg{n+1}\sg{n-1}\cdots\sg{1}\sg{n}\isg{1} =\sg{n-1}\isg{n}\isg{n-1}\cdots\isg{1}\isg{n+1}$ & & $(\sg{1}\cdots\sg{n+1}\sg{n-1}\cdots\sg{2})^2$ \\[3.5em]
\midrule\\
$[G(2,1,n)]_1^\alpha$, $n\geq2$ & & \raisebox{-2.75em}{\includegraphics[height=6em]{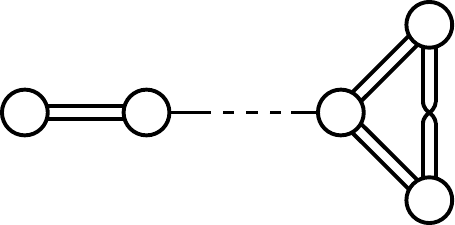}} & & $\sg{n}\sg{n+1}\sg{n}^{-1}\sg{n+2}=\sg{n+2}\sg{n}\sg{n+1}\sg{n}^{-1}$ & & $(\sg{1}\cdots\sg{n+2}\sg{n}\cdots\sg{2})^2$\\[3.5em]\bottomrule
\end{tabular}
\caption{The following table contains the Coxeter-like diagram $\pazocal{D}$, together with both the $x$-relation and the extra order relation, for the two (irreducible) infinite families of non-genuine Steinberg crystallographic complex reflection groups. Reminding that $[W(A_1)]^\alpha\simeq[G(2,1,1)]_1^\alpha$, the latter group is omitted so that we restrict to $n\geq2$ in type $C$. For this $1$-dimensional group only, $\pazocal{D}$ is a Coxeter diagram and no $x$-relation is involved. Nodes are increasingly numbered from left to right, and bottom to top when vertically aligned. }\label{tab:nongenuine}
\end{table}
\end{landscape}

\begin{section}{Genuine Steinberg families}\label{app2}
\vspace{-1em}
\begin{longtable}{ccccc}
\caption{The following table contains the Coxeter-like diagram $\pazocal{D}$, with its complementary extra order relation, for all (irreducible) infinite families of genuine Steinberg crystallographic complex reflection groups---as originally introduced in \cite{Malle1996,Puente2017}.
Nodes are increasingly numbered ``like a seismogram'': always from left to right, and bottom to top when vertically aligned.}\vspace{1em}
\label{tab:genuine}\\
\toprule
$W$ & & $\pazocal{D}$ & & extra order relation
\\\midrule\\
$[G(3,1,1)]$ & & \raisebox{-.25em}{\includegraphics[height=1.25em]{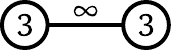}} & & $(\sg{1}\sg{2})^3$ \\[1.25em]
$[G(3,1,n)]_1$, $n\geq2$ & &  \raisebox{-.25em}{\includegraphics[height=1.25em]{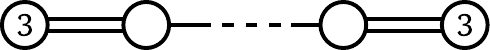}} & & $(\sg{1}\cdots\sg{n+1}\cdots\sg{2})^3$ \\[1em]
\midrule\\
$[G(4,1,1)]$ & & \raisebox{-.25em}{\includegraphics[height=1.25em]{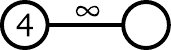}} & & $(\sg{1}\sg{2})^4$ \\[1.25em]
$[G(4,1,n)]_1$, $n\geq2$ & &  \raisebox{-.25em}{\includegraphics[height=1.25em]{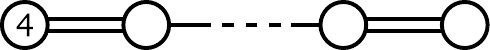}} & & $(\sg{1}\cdots\sg{n+1}\cdots\sg{2})^4$ \\[1em]
\midrule\\
$[G(4,1,2)]_2$ & & \raisebox{-.25em}{\includegraphics[height=1.25em]{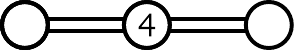}} & & $(\sg{1}\sg{2}\sg{3})^4$ \\[1.25em]
$[G(4,1,n)]_2$, $n\geq3$ & &  \raisebox{-.25em}{\includegraphics[height=3.75em]{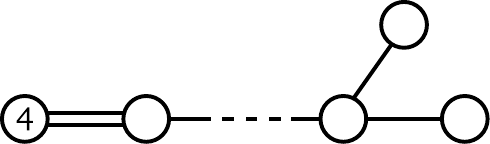}} & & $(\sg{1}\cdots\sg{n+1}\sg{n-1}\cdots\sg{2})^4$ \\[1em]
\midrule\\
$[G(4,2,2)]_1$ & & \raisebox{-2.75em}{\includegraphics[height=6em]{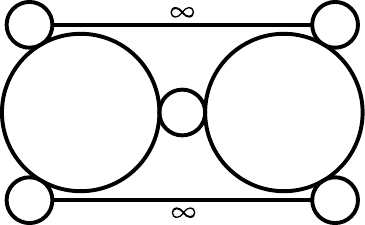}} & & $(\sg{2}\sg{1}\sg{3}\sg{4}\sg{5})^2$ \\[4em]
$[G(4,2,3)]_1$ & &  \raisebox{-2.75em}{\includegraphics[height=6em]{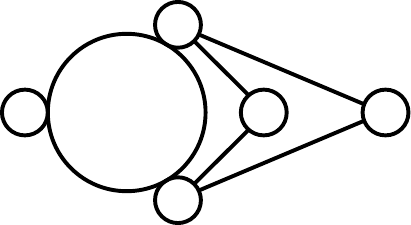}} & & $(\sg{1}\sg{2}\sg{3}\sg{4}\sg{5})^4$ \\[4em]
$[G(4,2,n)]_1$, $n\geq4$ & &  \raisebox{-2.75em}{\includegraphics[height=6em]{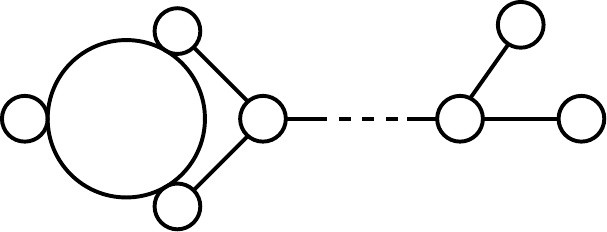}} & & $(\sg{1}\cdots\sg{n+2}\sg{n}\cdots\sg{4})^4$ \\[3.5em]
\midrule\\
$[G(4,2,2)]_2$ & & \raisebox{-2.75em}{\includegraphics[height=6em]{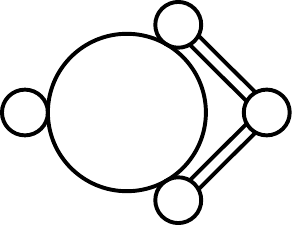}} & & $(\sg{1}\sg{2}\sg{3}\sg{4})^4$ \\[4em]
$[G(4,2,n)]_2$, $n\geq4$ & &  \raisebox{-2.75em}{\includegraphics[height=6em]{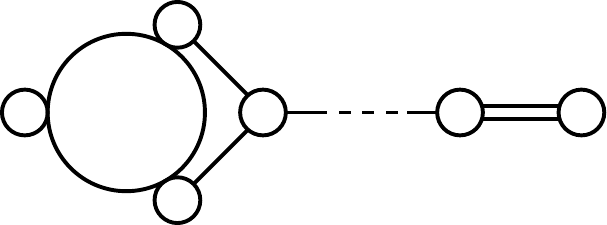}} & & $(\sg{1}\cdots\sg{n+2}\cdots\sg{4})^4$ \\[3.5em]
\midrule\\
$[G(6,1,1)]$ & & \raisebox{-.25em}{\includegraphics[height=1.25em]{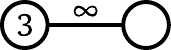}} & & $(\sg{1}\sg{2})^6$ \\[1.25em]
$[G(6,1,n)]$, $n\geq2$ & &  \raisebox{-.25em}{\includegraphics[height=1.25em]{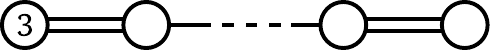}} & & $(\sg{1}\cdots\sg{n+1}\cdots\sg{2})^6$ \\[1em]
\midrule\\
$[G(6,2,2)]$ & & \raisebox{-.25em}{\includegraphics[height=1.25em]{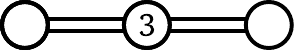}} & & $(\sg{1}\sg{2}\sg{3})^6$ \\[.75em]
$[G(6,2,n)]$, $n\geq3$ & &  \raisebox{-.25em}{\includegraphics[height=3.75em]{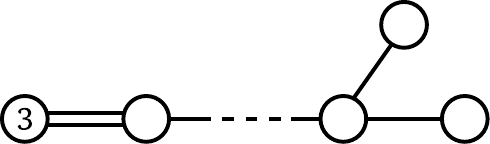}} & & $(\sg{1}\cdots\sg{n+1}\sg{n-1}\cdots\sg{2})^6$ \\[1em]
\midrule\\
$[G(6,3,2)]$ & & \raisebox{-2.75em}{\includegraphics[height=6em]{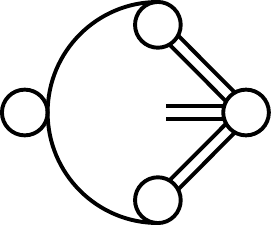}} & & $(\sg{2}\sg{3}\sg{4})^6$ \\[4em]
$[G(6,3,n)]$, $n\geq2$ & &  \raisebox{-2.75em}{\includegraphics[height=6em]{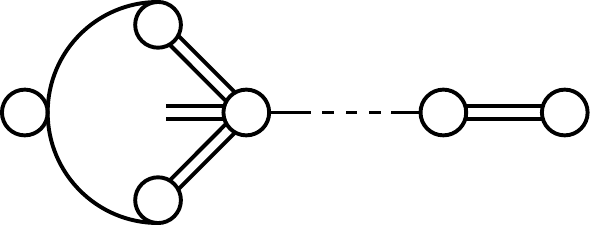}} & & $(\sg{2}\cdots\sg{n+2}\cdots\sg{4})^6$
\\[3.5em]\bottomrule
\end{longtable}
\end{section}

{\small
\bibliographystyle{abbrv}
\bibliography{references}
}

\end{document}